\newenvironment{proof}{\noindent {\bf Proof:}}{\hfill $\Box$}
\newtheorem{theorem}{Theorem}
\newtheorem{lemma}{Lemma}
\newtheorem{proposition}{Proposition}
\newtheorem{corollary}{Corollary}
\newtheorem{assumption}{Assumption}
\newtheorem{remark}{Remark}
\newcommand{\mycbox}[1]{\tikz{\path[draw=#1,fill=#1] (0,0) rectangle (1cm,1cm);}}
\title{\bf Controller design and value function approximation for nonlinear dynamical systems}
\begin{document}

\author{Milan Korda$^1$, Didier Henrion$^{2,3,4}$, Colin N. Jones$^1$}

\footnotetext[1]{Laboratoire d'Automatique, \'Ecole Polytechnique F\'ed\'erale de Lausanne, Station 9, CH-1015, Lausanne, Switzerland. {\tt \{milan.korda,colin.jones\}@epfl.ch}}
\footnotetext[2]{CNRS; LAAS; 7 avenue du colonel Roche, F-31400 Toulouse; France. {\tt henrion@laas.fr}}
\footnotetext[3]{Universit\'e de Toulouse; LAAS; F-31400 Toulouse; France.}
\footnotetext[4]{Faculty of Electrical Engineering, Czech Technical University in Prague,
Technick\'a 2, CZ-16626 Prague, Czech Republic.}

\date{Draft of \today}

\maketitle

\begin{abstract}
This work considers the infinite-time discounted optimal control problem for continuous time input-affine polynomial dynamical systems subject to polynomial state and box input constraints. We propose a sequence of sum-of-squares (SOS) approximations of this problem obtained by first lifting the original problem into the space of measures with continuous densities and then restricting these densities to polynomials. These approximations are tightenings, rather than relaxations, of the original problem and provide a sequence of rational controllers with value functions associated to these controllers converging (under some technical assumptions) to the value function of the original problem. In addition, we describe a method to obtain polynomial approximations from above and from below to the value function of the extracted rational controllers, and a method to obtain approximations from below to the optimal value function of the original problem, thereby obtaining a sequence of asymptotically optimal rational controllers with explicit estimates of suboptimality. Numerical examples demonstrate the approach.
\end{abstract}


\begin{center}\small
{\bf Keywords:} Optimal control, nonlinear control, sum-of-squares, semidefinite programming, occupation measures, value function approximation
\end{center}

\section{Introduction}
This paper considers the infinite-time discounted optimal control problem for continuous-time input-affine polynomial dynamical systems subject to polynomial state constraints and box input constraints. This problem has a long history in both control and economics literature. Various methods to tackle this problem have been developed, often based on the analysis of the associated Hamilton-Jacobi-Bellman equation.

In this work we take a different approach: We first lift the problem into an infinite-dimensional space of measures with continuous densities where this problem becomes convex; in fact a linear program~(LP). This lifting is a \emph{tightening}, i.e., its optimal value is greater than or equal to the optimal value of the original problem, and under suitable technical conditions the two optimal values coincide. This infinite-dimensional LP is then further tightened by restricting the class of functions to polynomials of a prescribed degree and replacing nonnegativity constraints by sufficient sum-of-squares (SOS) constraints. This leads to a hierarchy of semidefinite programming~(SDP) tightenings of the original problem indexed by the degree of the polynomials. The solutions to the SDPs yield immediately a sequence of \emph{rational} controllers, and we prove that, under suitable technical assumptions, the value functions associated to these controllers converge \emph{from above} to the value function of the original problem.

We also describe how to obtain a sequence of polynomial approximations converging from above and from below to the value function associated to each rational controller. Combined with existing techniques to obtain polynomial under approximations of the value function of the original problem (adapted to our setting), this method can be viewed as a design tool providing a sequence of rational controllers asymptotically optimal in the original problem with explicit estimates of suboptimality in each step.

The idea of lifting a nonlinear problem to an infinite-dimensional space dates back at least to the work of L. C. Young~\cite{young} and subsequent works of Warga~\cite{warga}, Vinter and Lewis~\cite{vinter_lewis}, Rubio~\cite{rubio} and many others, both in deterministic and stochastic settings. These works typically lift the original problem into the space of measures and this lifting is a \emph{relaxation} (i.e., its optimal value is less than or equal to the optimal value of the original problem) and under suitable conditions the two values coincide.

More recently, this infinite-dimensional lifting was utilized numerically by \emph{relaxing} the infinite-dimensional LP into a finite-dimensional SDP~\cite{sicon} or finite-dimensional LP~\cite{quincampoix}. Whereas the LP relaxations are obtained by classical state- and control-space gridding, the SDP relaxations are obtained by optimizing over truncated moment sequences (i.e., involving only finitely many moments) of the measures and imposing conditions \emph{necessary} for these truncated moment sequences to be feasible in the infinite-dimensional lifted problem. These finite-dimensional relaxations provide \emph{lower bounds} on the value function of the optimal control problem and seem to be difficult to use for control design with strong convergence guarantees; a controller extraction from the relaxations is possible although no convergence (e.g.,~\cite{henrion_synthesis_cdc,quincampoix}) or only very weak convergence can be established (e.g.,~\cite{korda_IFAC,majumdar} in the related context of region of attraction approximation).

Contrary to these works, in this approach we tighten the infinite-dimensional LP by optimizing over polynomial densities of the measures and imposing conditions \emph{sufficient} for these densities to be feasible in the infinite-dimensional lifted problem, thereby obtaining upper bounds as opposed to lower bounds. Crucially, to ensure that polynomial densities of arbitrarily low degrees exist for our problem (and therefore the resulting SDP tightenings are feasible), we work with free initial and final measures and set up the cost function and constraints such that this additional freedom does not affect optimality. Importantly, we do \emph{not} assume that the state constraint set is control invariant, a requirement that is often imposed in the existing literature (e.g., \cite{rantzer_duality_cost_density}) but rarely met in practice.

The presented approach bears some similarity with the density approach of~\cite{rantzer_synthesis} for global stabilization later extended to optimal control (in a purely theoretical setting) in~\cite{rantzer_duality_cost_density} and recently generalized to optimal stabilization of a given invariant set in~\cite{vaidya_optimal} (providing both theoretical results and a practical computation method). However, contrary to~\cite{rantzer_synthesis} we consider the problem of optimal control, not stabilization and moreover we work under state constraints. Contrary to~\cite{vaidya_optimal} we work in continuous time, consider a more general problem (optimal control, not optimal stabilization of a given set) and our approach of finite-dimensional approximation is completely different in the sense that it is based purely on convex optimization and it does not rely on state-space discretization. Moreover, and importantly, our approach comes with convergence guarantees.

Finally, let us mention that this work is inspired by~\cite{lasserre_upper}, where a converging sequence of upper bounds on \emph{static} polynomial optimization problems was proposed, as opposed to a converging sequence of lower bounds as originally developed in~\cite{lasserre_lower}.

\section{Preliminaries}
\subsection{Notation}
We use $L(X;Y)$ to denote the space of all Lebesgue measurable functions defined on a set $X \subset \mathbb{R}^n$ and taking values in the set $Y \subset \mathbb{R}^m$.  If the space $Y$ is not specified it is understood to be~$\mathbb{R}$.  The spaces of integrable functions and essentially bounded functions are denoted by $L^1(X;Y)$ and $L^\infty(X;Y)$, respectively. The spaces of continuous respectively $k$-times continuously differentiable functions are denoted by $C(X;Y)$ respectively $C^k(X;Y)$. By a (Borel) measure we understand a countably additive mapping from (Borel) sets to nonnegative real numbers. Integration of a continuous function $v$ with respect to a measure $\mu$ on a set $X$ is denoted by $\int_X v(x)\,d\mu(x)$ or also $\int v\,d\mu$ when the variable and domain of integration are clear from the context. A probability measure is a measure with unit mass (i.e., $\int 1 d\mu = 1$). The support of a measure $\mu$, defined as the smallest closed set whose complement has zero measure, is denoted by $\mathrm{spt}\,\mu$. The ring of all multivariate polynomials in a variable $x$ is denoted by $\mathbb{R}[x]$, the  vector space of all polynomials of degree no more than $d$ is denoted by $\mathbb{R}[x]_d$, and the vector space of $m$-dimensional polynomial vectors is denoted by $\mathbb{R}[x]^m$. The boundary of a set $X$ is denoted by $\partial X$, the interior by $X^\circ$ and the closure by $\bar{X}$. The Euclidean distance of a point $x$ from a set $X$ is denoted by $\mathrm{dist}_{X}(x) $. For a possibly matrix-valued function $f\in C(X;\mathbb{R}^{n\times m})$ we define $\| f \|_{C^0(X)} := \sup_{x\in X}\max_{i,j}|f_{i,j}(x)|$ and for a vector-valued function $g\in C^1(X;\mathbb{R}^n)$ we define $\|g\|_{C^1(X)} := \| g\|_{C^0(X)} + \| \frac{\partial g}{\partial x}\|_{C^0(X)}$, where $\frac{\partial g}{\partial x}$ denotes the Jacobian of $g$. If clear from the context we write $\| \cdot\|_{C^0}$ for $\| \cdot\|_{C^0(X)}$ and similarly for the $C^1$ norm.

\subsection{SOS programming}\label{sec:sosProg}
Crucial to the material presented in the paper is the ability to decide whether a polynomial $p \in \mathbb{R}[x]$ is nonnegative on a set
\[ 
X = \{x\in \mathbb{R}^n\mid g_i(x) \ge 0, \; i = 1,\ldots,n_g\},
\]
with $g_i\in\mathbb{R}[x]$. A \emph{sufficient} condition for $p$ to be nonnegative on $X$ is that it belongs to the truncated quadratic module of degree $d$ associated to $X$,
\[
Q_d(X) := \Big\{  s_0 + \sum_{i=1}^{n_a} g_i(x) s_i(x)   \mid s_0\in \Sigma_{2 \lfloor\frac{d}{2}\rfloor  } , s_i \in \Sigma_{2 \big\lfloor\frac{(d- \mathrm{deg}\, g_i)}{2}\big\rfloor  } \Big\},
\]
where $\Sigma_{2k}$ is the set of all polynomial sum-of-squares (SOS) of degree at most $2k$. Note in particular that $Q_{d+1}(X)\supset Q_d(X)$. If $p \in Q_d(X)$ for some $d \ge 0$ then clearly $p$ is nonnegative on $X$, and the following fundamental result shows that a certain converse result holds.
\begin{proposition}[Putinar~\cite{putinar}]\label{prop:putinar}
Let $N - \|x \|^2 \in Q_d(X)$ for some $d > 0$ and $N\ge 0$ and let $p \in \mathbb{R}[x]$ be strictly positive on $X$.Then $p \in Q_d(X)$ for some $d \ge 0$.
\end{proposition}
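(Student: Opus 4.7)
The plan is to follow the classical Hahn--Banach and moment-problem route. Set $Q(X):=\bigcup_{d\ge 0} Q_d(X)$; this is a convex cone in $\mathbb{R}[x]$. I would argue by contradiction: assume $p$ is strictly positive on $X$ but $p\notin Q(X)$. Since $Q(X)$ is convex and $-p$ avoids it, a geometric Hahn--Banach separation in the (finite-dimensional) subspace $\mathbb{R}[x]_{\deg p}$, followed by a standard inductive extension to all of $\mathbb{R}[x]$ using that each $Q_d(X)$ is closed in $\mathbb{R}[x]_d$, yields a linear functional $L:\mathbb{R}[x]\to\mathbb{R}$ with $L(q)\ge 0$ for every $q\in Q(X)$ and $L(p)\le 0$. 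The goal is then to identify $L$ with integration against a Borel measure $\mu$ supported in $X$; since $p>0$ on $X=\mathrm{spt}\,\mu$ this will give $L(p)=\int p\,d\mu>0$, a contradiction (after first ruling out $\mu\equiv 0$, which I handle by checking $L(1)>0$; if $L(1)=0$ one can add a small multiple of the constant functional).

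The crux is to upgrade $L$ to a continuous functional on $C(X)$. Here the Archimedean hypothesis $N-\|x\|^2\in Q_d(X)$ does the work: since $N-\|x\|^2\in Q(X)$, one gets $L(\|x\|^2)\le N\,L(1)$, and iterating this bound on even powers of coordinates (using that $(N-\|x\|^2)^k$ lies in $Q(X)$ as a product of elements of $Q(X)$, or by pairing with appropriate squares) gives $L(x_i^{2k})\le C_k$ for explicit constants. By Cauchy--Schwarz applied to the positive semidefinite bilinear form $(q_1,q_2)\mapsto L(q_1q_2)$ (which is PSD because $L$ is nonnegative on squares), every monomial $x^\alpha$ satisfies $|L(x^\alpha)|\le M^{|\alpha|}$ for some $M$. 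In particular $|L(q)|\le c\,\|q\|_\infty^\sim$ for polynomials, where $\|\cdot\|_\infty^\sim$ controls coefficient sums, which suffices to extend $L$ by density (Stone--Weierstrass on the ball of radius $\sqrt N$ that contains $X$) to a bounded linear functional on $C(\bar B_{\sqrt N})$.

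By the Riesz representation theorem this extension is integration against a signed Borel measure $\mu$ on $\bar B_{\sqrt N}$; since $L$ is nonnegative on squares, $\mu$ is in fact a nonnegative measure. It remains to show $\mathrm{spt}\,\mu\subseteq X$. For each generator $g_i$ of $X$ and any $\phi\in C(\bar B_{\sqrt N})$ with $\phi\ge 0$, approximate $\phi$ uniformly by polynomials $s_n^2$ so that $g_i s_n^2\in Q(X)$, hence $\int g_i s_n^2\,d\mu=L(g_i s_n^2)\ge 0$; passing to the limit gives $\int g_i \phi\,d\mu\ge 0$ for all $\phi\ge 0$, so $g_i\ge 0$ $\mu$-a.e., and taking the intersection over $i$ forces $\mathrm{spt}\,\mu\subseteq X$. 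Combined with strict positivity of $p$ on $X$, this delivers the desired contradiction.

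The main obstacle I expect is the boundedness step: turning the single Archimedean certificate $N-\|x\|^2\in Q_d(X)$ into uniform moment bounds $|L(x^\alpha)|\le M^{|\alpha|}$ requires a careful combinatorial/Cauchy--Schwarz argument, because $L$ is a priori only defined on polynomials and the Archimedean inequality controls only the second moment. Everything else (separation, Riesz, support identification) is fairly standard once that quantitative growth bound is in hand.
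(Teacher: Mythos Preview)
The paper does not supply a proof of this proposition; it is quoted as Putinar's Positivstellensatz with a citation and used as a black box throughout. So there is no ``paper's proof'' to compare against.

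Your outline is the standard functional-analytic route and is essentially correct. Two remarks worth making. First, the closedness of $Q_d(X)$ in $\mathbb{R}[x]_d$ that you invoke for the separation step is true but itself nontrivial (it uses that one of the $g_i$'s is a ball constraint); an alternative that sidesteps separation entirely is the Zorn's-lemma argument producing a maximal proper quadratic module containing $Q(X)$ but not $p$, which then yields a ring homomorphism to $\mathbb{R}$, i.e.\ a point evaluation at some $x_0\in X$ with $p(x_0)\le 0$. Second, the moment bound you correctly flag as the crux has a clean inductive proof: from $N-\|x\|^2\in Q(X)$ one gets $N-x_i^2\in Q(X)$ for each $i$ (the remaining $\sum_{j\ne i}x_j^2$ is a square), and then the identity
\[
N^{k+1}-x_i^{2(k+1)}=N^{k}(N-x_i^{2})+x_i^{2}\bigl(N^{k}-x_i^{2k}\bigr)
\]
shows inductively that $N^k-x_i^{2k}\in Q(X)$ for all $k$, since quadratic modules are closed under multiplication by squares and by nonnegative scalars. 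Hence $L(x_i^{2k})\le N^{k}L(1)$, and repeated Cauchy--Schwarz on the PSD form $(q_1,q_2)\mapsto L(q_1q_2)$ gives $|L(x^\alpha)|\le N^{|\alpha|/2}L(1)$, which is precisely the growth bound needed to extend $L$ and invoke Riesz on the ball of radius $\sqrt{N}$.
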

Combining with the Stone-Weierstrass Theorem, as an immediate corollary we get:
\begin{corollary}\label{cor:densNonneg}
Let $f \in C(X)$ be nonnegative on $X$ and let $N - \|x \|^2 \in Q_d(X)$ for some $d > 0$ and $N\ge 0$. Then for every $\epsilon \ge 0$ there exists $d\ge 0$ and $p_d \in Q_d(X)$ such that $\| f - p_d \|_{C^0} < \epsilon$.
\end{corollary}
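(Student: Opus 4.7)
The plan is a three-line argument combining Stone--Weierstrass density with Putinar's Positivstellensatz. First I would observe that the hypothesis $N - \|x\|^2 \in Q_d(X)$ forces $X$ to lie inside the Euclidean ball of radius $\sqrt{N}$, and since $X$ is already closed (being cut out by finitely many polynomial inequalities $g_i \ge 0$), it is compact. Compactness is what we need both for the Stone--Weierstrass theorem and to allow the uniform ($C^0$) estimate $\|\cdot\|_{C^0(X)}$ to control the function on all of $X$.

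Next, given $\epsilon > 0$, I would use the Stone--Weierstrass theorem to pick a polynomial $q \in \mathbb{R}[x]$ with $\|f - q\|_{C^0(X)} < \epsilon/2$. The candidate approximant is then the small positive shift
\[
p_d := q + \epsilon/2.
\]
On $X$ we have $p_d = q + \epsilon/2 \ge f - \|f-q\|_{C^0} + \epsilon/2 \ge 0 - \epsilon/2 + \epsilon/2$ plus a strictly positive slack coming from the strict inequality in Stone--Weierstrass; more precisely, replacing $\epsilon/2$ by any slightly larger shift if needed, $p_d$ is bounded below on $X$ by a strictly positive constant.

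Now I would invoke Proposition \ref{prop:putinar}: the Archimedean hypothesis $N - \|x\|^2 \in Q_d(X)$ is precisely what Putinar requires, and $p_d$ is strictly positive on $X$, so $p_d \in Q_{d'}(X)$ for some sufficiently large integer $d'$. Taking this $d'$ as the index in the corollary, the triangle inequality gives
\[
\|f - p_d\|_{C^0(X)} \le \|f - q\|_{C^0(X)} + \epsilon/2 < \epsilon,
\]
which is the desired conclusion.

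There is essentially no hard step: the only thing to be a little careful about is ensuring the strict positivity needed to apply Putinar, which is why I add a fixed positive shift rather than trying to approximate $f$ directly by an SOS-module element. One may further refine the shift (e.g., take $\epsilon/3$ in Stone--Weierstrass and add $\epsilon/2$) to obtain a clean strict lower bound $p_d \ge \epsilon/6 > 0$ on $X$, but this is cosmetic.
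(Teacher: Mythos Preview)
Your proposal is correct and follows exactly the route the paper has in mind: the paper states the corollary as an immediate consequence of combining the Stone--Weierstrass theorem with Proposition~\ref{prop:putinar}, and your argument (approximate $f$ by a polynomial $q$, shift by a small positive constant to force strict positivity on $X$, then invoke Putinar) is precisely that combination spelled out. The only cosmetic point is that the strict inequality $\|f-q\|_{C^0}<\epsilon/2$ already yields the uniform lower bound $p_d \ge \epsilon/2 - \|f-q\|_{C^0} > 0$ on $X$, so no further refinement of the shift is actually needed.
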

Corollary~\ref{cor:densNonneg} says that polynomials in $Q_d(X)$ are dense (with respect to the $C^0$ norm) in the space of continuous functions nonnegative on $X$ when we let $d$ tend to infinity.

In the rest of the text we use standard algebraic operations on sets. For instance if we write that $p \in gQ_d(X) + h\mathbb{R}[x]_d$, then it means that $p = g q + hr$ with $q \in Q_d(X)$ and $r \in \mathbb{R}[x]_d$.

The inclusion of $p \in Q_d(X)$ for a fixed $d$ is equivalent to the existence of a positive semidefinite matrix $W$ such that $p(x) = b(x)^{\top} W b(x)$, where $b(x)$ is a basis of $\mathbb{R}[x]_{d/2}$, the vector space of polynomials of degree at most $d/2$. Comparing coefficients leads to a set of affine constraints on the coefficients of $p$ and the entries of $W$. Deciding whether $p \in Q_d(X)$ therefore translates to the feasibility of a semidefinite programming problem with the coefficients of $p$ entering affinely. As a result, optimization of a linear function of the coefficients of $p$ subject to the constraint $p \in Q_d(X)$ translates to a semidefinite programming problem~(SDP) and hence to a well-understood and widely studied class of convex optimization problems for which powerful algorithms and off-the-shelf software are available. See, e.g.,~\cite{lasserre} and the references therein for more details.

\section{Problem statement}
We consider the continuous-time input-affine\footnote{Any dynamical system $\dot{x} = f(x,u)$ depending nonlinearly on $u$ can be transformed to the input-affine form by using state inflation $\begin{bmatrix} \dot{x} \\ \dot{u}  \end{bmatrix} = \begin{bmatrix} f(x,u) \\ v  \end{bmatrix}$, where $u$ is now a part of the state and $v$ a new control input; constraints on $v$ then correspond to rate constraints on $u$. Similarly, cost functions depending non-linearly on $u$ in problem~(\ref{opt:main}) can be handled using state inflation in exactly the same fashion.\label{foot:augment}} controlled dynamical system
\begin{equation}\label{eq:sys}
\dot{x}(t) = f(x(t)) + \sum_{i=1}^m f_{u_i}(x(t))u_i(t),
\end{equation}
where $x \in \mathbb{R}^n$ is the state, $u \in \mathbb{R}^m$ is the control input, and the data are polynomial: $f \in \mathbb{R}[x]^n$, 
$f_{u_i} \in \mathbb{R}[x]^n$, $i=1,\ldots,m$. The system is subject to semi-algebraic state and box\footnote{Any box can be affinely transformed to $[0,\bar{u}]$.} input constraints
\begin{subequations}\label{eq:XU}
\begin{align}
&x(t) \in X:= \{ x \in \mathbb{R}^n \mid g_i (x) \ge 0,\;  i=1,\ldots,n_g      \}, \\
&u(t) \in U:= [0,\bar{u}]^m,
\end{align}
\end{subequations}
where $g \in \mathbb{R}[x]^{n_g}$ and $\bar{u}\ge0$. The set $X$ is assumed compact and the polynomials defining $X$ are assumed to be such that
\begin{equation}\label{eq:barg}
\bar{g}(x) := \prod_{i=1}^{n_g} g_i(x) > 0\quad \forall x\in X^\circ.
\end{equation}

Since $X$ is assumed compact, we also assume, without loss of generality, that the inequalities defining the sets $X$ contain the inequality $N-\| x\| ^2 \ge 0$ for some $N \ge 0$.


The goal of the paper is to (approximately) solve the following optimal control problem (OCP):

\begin{equation}\label{opt:main}
\begin{array}{rclll} \displaystyle
V(x_0) & := & \inf\limits_{u(\cdot),\tau(\cdot)} &  \int_0^{\tau(x_0)} e^{-\beta t} [l_x(x(t)) + \sum_{i=1}^ml_{u_i}(x(t))u_i(t) ]\,dt + e^{-\beta \tau}M   \vspace{1.5mm}\\ \displaystyle
&& \hspace{0.6cm} \mathrm{s.t.}  & x(t) = x_0 + \int_0^t f(x(s)) +  \sum_{i=1}^m f_{u_i}(x(s))u_i(s)\,ds, \vspace{1.5mm}\\ \displaystyle
&&& (x(t),u(t)) \in X\times U\;\; \forall t \in [0,\tau(x_0)]  \vspace{1.5mm} \\ \displaystyle
&&& u \in L^{\infty}([0,\tau(x_0)];U),\; \tau \in L(X;[0,\infty])
\end{array}
\end{equation}
where $\beta > 0$ is a given discount factor and $M$ is a constant chosen such that
\begin{equation}\label{eq:M}
M > \beta^{-1}\sup_{x\in X, u \in U}\{ l(x,u)\},
\end{equation}
where the joint stage cost
\begin{equation}\label{eq:joinStageCost}
l(x,u):=\l_x(x) + \sum_{i=1}^m l_{u_i}(x)u_i
\end{equation}
is, without loss of generality, assumed to be nonnegative on $X\times U$. The state and input stage cost functions $l_x$ and $l_{u_i}$, $i=1,\ldots,m$, are assumed to be polynomial. The function $\tau$ in {OCP (\ref{opt:main}) is referred to as a \emph{stopping function}; the optimization is therefore both over the control input $u$ and over the final time $\tau(x_0)$, which can be finite or infinite and can depend on the initial condition $x_0$.

The function $x \mapsto V(x)$ in (\ref{opt:main}) is called the {\it value function}. The reason for choosing the slightly non-standard objective function in~(\ref{opt:main}) is because with this objective function the value function $V$ is bounded (by $M$) on $X$ and it coincides with the standard\footnote{By standard we mean a discounted optimal control problem with cost $\int_0^\infty e^{-\beta t} [l_x(x(t)) + \sum_{i=1}^ml_{u_i}(x(t))u_i(t) ]\,dt $ and no stopping function.} discounted infinite-horizon value function for all initial conditions $x_0 \in X$ for which the trajectories can be kept within the state constraint set $X$ forever using admissible controls, i.e., for all $x_0$ in the maximum control invariant set associated to the dynamics~(\ref{eq:sys}) and the constraints~(\ref{eq:XU}). To see the first claim, set $\tau(x_0) = 0$ for all $x_0 \in X$. To see the second claim notice that with $M$ chosen as in~(\ref{eq:M}), it is always beneficial to continue the time evolution whenever possible and therefore $\tau(x_0) = +\infty$ for all $x_0$ in the maximum controlled invariant set associated to (\ref{eq:sys}) and (\ref{eq:XU}).

\begin{remark}\label{rem:M}
A constant $M$ satisfying~(\ref{eq:M}) can be found either by analytically evaluating the supremum in~(\ref{eq:M}) or by using the techniques of~\cite{lasserre_lower} to find an upper bound.
\end{remark}


Given a Lipschitz continuous \emph{feedback} controller $u \in C(X;U)$ and a stopping function $\tau\in L(X;[0,\infty])$, the ODE~(\ref{eq:sys}) has a unique solution and we let $V_{u,\tau} \in L(X;[0,\infty])$ denote the value function attained by $(u,\tau)$ in~(\ref{opt:main}), i.e., setting $u(t) = u(x(t))$. By $V_u$ we denote the value function $V_{u,\tau_u^\star}$, where $\tau^\star_u \in L(X;[0,\infty])$ is the optimal stopping function associated to~$u$. Note that, by the choice of $M$ in~(\ref{eq:M}), the optimal stopping function $\tau_u^\star$ is equal to the first hitting time of the complement of the constraint set $X$, i.e., \[\tau_u^\star(x_0) = \mathrm{inf}\{ t\ge 0 \mid x(t\! \mid \! x_0)\notin X \},\] where $x(t\! \mid \! x_0)$ is the trajectory of~(\ref{eq:sys}) with $u(t) = u(x(t))$ starting from $x_0$. Notice also that $V_{u,\tau}(x) \ge V(x)$ for all $x\in X$ and that for any pair $(u,\tau)$ feasible in~(\ref{opt:main}) we have $V_{u,\tau}(x) \le M$ for all $x\in X$.

Throughout the paper, we make the following technical assumption:

  \begin{assumption}\label{as:smoothCont}
There exists a sequence of Lipschitz continuous feedback controllers $\{u^k \in C(X;U)  \}_{k=1}^\infty$ and stopping functions $\{\tau^k \in L(X; [0,\infty])\}_{k=1}^\infty$ feasible in~(\ref{opt:main}) such that
\begin{equation}\label{eq:valConv}
\lim_{ k\to\infty } \int_{X} (V_{u^k,\tau^k}(x)-V(x))dx = 0
\end{equation}
and such that for every $k \ge 0$ there exist a function $\rho^k\in C^1(X)$ and a scalar $\gamma^k >0$
such that $\rho^k(x) = 0$ if $\mathrm{dist}_{\partial X}(x) < \gamma_k$ and
\begin{equation}\label{eq:smoothDens2}
\int_X \int_0^{\tau^k(x_0)} e^{-\beta t} v(x^k(t \! \mid \! x_0))\, dt\,dx_0 = \int_X v(x) \rho^k(x)\,dx \quad \forall v \in C(X),
\end{equation}
where $x^k(\cdot \! \mid \! x_0)$ denotes the solution to~(\ref{eq:sys}) controlled by $u^k$.
\end{assumption}

\begin{remark}\label{rem:L1}
Note that $V_{u^k,\tau^k} \ge V$ on $X$ by construction and therefore~(\ref{eq:valConv}) is equivalent to the $L^1$ convergence of $V_{u^k,\tau^k}$ to $V$.
\end{remark}

%

Assumption~\ref{as:smoothCont} says that the optimal control inputs and stopping functions for OCP (\ref{opt:main}) can be well approximated by Lipschitz continuous \emph{feedback} controllers and measurable stopping functions such that the resulting densities of the discounted occupation measures are continuously differentiable and vanish near the boundary of $X$. Note that the existence of an optimal feedback controller, as well as whether it can be well approximated by Lipschitz controllers, are subtle issues. Similarly it is a subtle issue whether asymptotically optimal stopping functions can be found such that the associated densities $\rho^k$ in~(\ref{eq:smoothDens2}) are continuously differentiable and vanish near the boundary of $X$ (note, however, that the left hand side of~(\ref{eq:smoothDens2}) can always be represented as $\int_X v(x) d\mu^k(x)$ for some nonnegative measure~$\mu^k$). This problem is of rather technical nature and has been studied in the literature (e.g.,~\cite[Section~1.4]{crippa_thesis} or~\cite{rantzer_converse}), where affirmative results have been established in related settings. We do not undertake a study of this problem here and rely on Assumption~\ref{as:smoothCont}, which is, for ease of reading, not stated in its most general form. For example, the functions $\rho^k$ do not need to be $C^1$ but only weakly differentiable and the integration on the left-hand side of~(\ref{eq:smoothDens2}) can be weighted by a nonnegative function $\rho^k_0 \in L_1(X)$ satisfying $\rho_0^k \ge 1$ on $X$ and $\rho_0^k \to 1$ in $L_1(X)$. In addition, we conjecture that it is enough to require $\rho^k = 0$ on $\partial X$ and not necessarily on some neighborhood of $\partial X$; this is in particular the case when $X$ is a box or a ball but we expect all the results of the paper to hold with a general semialgebraic set for which the defining functions satisfy~(\ref{eq:barg}).




The main result of this paper is a hierarchy of sum-of-squares (SOS) problems providing an explicit sequence of \emph{rational} feedback controllers $u^k \in C^\infty(X;U)$ such that, under Assumption~\ref{as:smoothCont}, (\ref{eq:valConv}) holds with $\tau^k = \tau_{u^k}^\star$, i.e., a sequence of asymptotically optimal rational controllers in the sense of the $L^1$ convergence of the associated value functions (see Remark~\ref{rem:L1}).



\section{Converging hierarchy of solutions}\label{hierarchy}

In this section we present an infinite-dimensional linear program (LP) in the space of continuous functions whose sum-of-squares (SOS) approximations provide a sequence of \emph{rational} controllers $u^k$ satisfying~(\ref{eq:valConv}). This infinite-dimensional LP is closely related (and in a weak sense equivalent) to OCP (\ref{opt:main}); the rationale behind the derivation of the LP and its relation to OCP (\ref{opt:main}) is detailed in Section~\ref{sec:rationale}.

 The infinite-dimensional LP reads
\begin{equation}\label{opt:dual_inf}
\begin{array}{rclll}
 & \inf\limits_{\rho,\,\rho_0,\; \rho_T,\; {\sigma}} &  \int_X l_x(x)\rho(x) \, dx + \sum_{i=1}^m\int_X l_{u_i}(x)\sigma_i(x) \, dx + M\int_X \rho_T(x) \, dx  \\
& \hspace{0.0cm} \mathrm{s.t.}  & \rho_T  - \rho_0 + \beta\rho + \mathrm{div}(\rho f) + \sum_{i = 1}^m\mathrm{div}(\sigma_i f_{u_i}) = 0\\
&& \rho \le 0  & \hspace{-1.5cm} \mathrm{on} \:\: \partial X\\
&& \rho_0 \ge 1 \:\: & \hspace{-1.5cm} \mathrm{on}\:\: X \\
&& \bar{u}\rho \ge \sigma_i  & \hspace{-1.5cm} \mathrm{on}\:\: X, \;\; i=1,\ldots,m.\\
&& \rho_T \ge 0 & \hspace{-1.5cm} \mathrm{on}\:\: X \\
&& \sigma_i \ge 0  & \hspace{-1.5cm} \mathrm{on}\:\: X, \;\; i=1,\ldots,m.
\end{array}
\end{equation}
The optimization in~(\ref{opt:dual_inf}) is over functions $(\rho,\rho_0, \rho_T, {\sigma})\in C^1(X)\times C(X)\times C(X)\times C^1(X)^{m}$ with ${\sigma} = (\sigma_1,\ldots,\sigma_m)$.

The optimal value of~(\ref{opt:dual_inf}) will be denoted by $p^\star$. The value attained in~(\ref{opt:dual_inf}) by any tuple of densities $(\rho,\rho_0, \rho_T, \sigma) $ feasible in~(\ref{opt:dual_inf}) will be denoted by~$p(\rho,\rho_0, \rho_T, \sigma) $.

\begin{remark}[Non-uniform weighting]\label{rem:nonUnifWeight} Note that we could have imposed $\rho_0 \ge \bar{\rho}_0$ for any polynomial $\bar{\rho}_0 $ nonnegative on $X$. Choosing a different $\bar{\rho}_0$ has no impact on the asymptotic convergence of the value functions established in the rest of the paper as long as $\bar{\rho}_0$ is strictly positive on~$X$. It may, however, influence the speed of convergence in different subsets of~$X$. In general we expect faster convergence where $\bar{\rho}_0$ is large and slower convergence where it is small. Choosing a non-constant $\bar{\rho}_0$ therefore allows to assign a different importance to different subsets of $X$.
\end{remark}


The infinite-dimensional LP~(\ref{opt:dual_inf}) is then approximated by a hierarchy of sum-of-squares~(SOS) problems, which immediately translate to finite-dimensional semidefinite programs~(SDPs).

The SOS approximation of degree $d$ of~(\ref{opt:dual_inf}) reads
\begin{equation}\label{opt:dual_sos}
\begin{array}{rclll}
 & \inf\limits_{(\rho,\rho_0,\rho_T,{\sigma})\in\mathbb{R}[x]_d^{3+m}} &  \int_X l_x(x) \rho(x) \, dx + \sum_{i=1}^m\int_X l_{u_i}(x)\sigma_i(x)\, dx + M\int_X \rho_T(x) \, dx \\
& \hspace{0.0cm} \mathrm{s.t.}  & \rho_T  - \rho_0 + \beta\rho + \mathrm{div}(\rho f) + \sum_{i = 1}^m\mathrm{div}(\sigma_i f_{u_i}) = 0\\
&& -\rho \in Q_d(X) +g_i\mathbb{R}[x]_{d-\mathrm{deg}\, g_i} + \bar{g}\mathbb{R}[x]_{d-\mathrm{deg}\, \bar{g}}  & \hspace{-2cm} i=1,\ldots,n_g\\
&& \rho_0 - 1 \in Q_d(X)  \\
&& \bar{u}\rho - \sigma_i \in Q_d(X) + \bar{g}Q_{d-\mathrm{deg}\, \bar{g}}(X) & \hspace{-2cm} i=1,\ldots,m\\
&& \rho_T \in Q_d(X)\\
&& \sigma_i\in Q_d(X) + \bar{g}Q_{d-\mathrm{deg}\, \bar{g}}(X),  & \hspace{-2cm} i=1,\ldots,m.
\end{array}
\end{equation}
Once a basis for $\mathbb{R}[x]_d$ is fixed (e.g., the standard monomial basis), the objective becomes linear in the coefficients of polynomials $\rho$, $\sigma$ and $\rho_T$,  and the equality constraint is imposed by equating the coefficients. The inclusions in the quadratic modules translate to semidefinite constraints and affine equality constraints; see Section~\ref{sec:sosProg}. Optimization problem~(\ref{opt:dual_sos}) therefore immediately translates to an SDP.


\begin{remark}[Feasibility]\label{rem:feasibility}
Trivially, any feasible solution to~(\ref{opt:dual_sos}) is feasible in~(\ref{opt:dual_inf}). Also, problem~(\ref{opt:dual_sos}) is feasible for any $d\ge 0$. Indeed $(\rho,\rho_0,\rho_T,\sigma) = (0,1,1,0)$ is always feasible in~(\ref{opt:dual_sos}). See also Remark~\ref{rem:termMeasure} below.
\end{remark}

If non-uniform weighting of initial conditions (see Remark~\ref{rem:nonUnifWeight}) was required, the constraint $ \rho_0 - 1 \in Q_d(X)$ would be replaced by  $\rho_0 - \bar{\rho}_0 \in Q_d(X)$ for a polynomial weighting function $\bar{\rho}_0 $ nonnegative on $X$.

Given an optimal solution $(\rho^d,\rho_0^d,\rho_T^d,{\sigma}^d)$ to~(\ref{opt:dual_sos}), we define a rational control law $u^d$ by
\begin{equation}\label{eq:ud_def}
u^d_i(x) := \frac{\sigma_i^d(x)}{\rho^d(x)} \; \; \forall x \in X, \;i=1,\ldots,m.
\end{equation}

The main result of the paper is the following theorem stating that the controllers $u^d$ are asymptotically optimal:
 
\begin{theorem}\label{thm:main}
For all $d \ge 0$ we have $u^d(x) \in U$ for all $x \in X$ and if Assumption~\ref{as:smoothCont} holds, then
\begin{equation}
\lim_{d\to \infty} \int_X (V_{u^d}(x)  - V(x)) \,  dx = 0,
\end{equation}
that is, $V_{u^d} \to V$ in $L_1(X)$ (note that $V_{u^d} \ge V$ on $X$).
\end{theorem}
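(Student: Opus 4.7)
The plan is to sandwich $\int_X V_{u^d}\,dx$ between $\int_X V\,dx$ (from below, trivially, since $V_{u^d}\ge V$) and the optimal value $p^\star_d$ of~(\ref{opt:dual_sos}) (from above), and then show $p^\star_d\to\int_X V\,dx$ as $d\to\infty$. The proof splits into three steps: (i) $u^d$ is admissible; (ii) $\int_X V_{u^d}\,dx\le p^\star_d$; (iii) $p^\star_d\to\int_X V\,dx$. For (i), any SOS-feasible tuple is also feasible in the infinite-dimensional LP~(\ref{opt:dual_inf}), so the pointwise inequalities $0\le\sigma_i^d\le\bar u\rho^d$ hold on $X$. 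This forces $\rho^d\ge 0$, and $\sigma_i^d=0$ wherever $\rho^d=0$, so $u_i^d=\sigma_i^d/\rho^d\in[0,\bar u]$ on $\{\rho^d>0\}$, and I extend by $0$ on $\{\rho^d=0\}$ to get $u^d(x)\in U$ for every $x\in X$.

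For (ii), the SOS constraints (combined with the $\rho^d\ge 0$ just shown) enforce $\rho^d=\sigma_i^d=0$ on $\partial X$. Multiplying the Liouville equality by the closed-loop value function $V_{u^d}$ and integrating by parts (boundary terms vanish) yields
\[
\int_X V_{u^d}\rho_T^d\,dx-\int_X V_{u^d}\rho_0^d\,dx+\int_X \rho^d\bigl[\beta V_{u^d}-\nabla V_{u^d}\cdot(f+\textstyle\sum_i u_i^d f_{u_i})\bigr]\,dx=0.
\]
The closed-loop Hamilton-Jacobi-Bellman identity $\beta V_{u^d}-\nabla V_{u^d}\cdot(f+\sum_i u_i^d f_{u_i})=l(x,u^d(x))$ (holding a.e.\ on $X$), together with $V_{u^d}\le M$ on $X$ (from~(\ref{eq:M})) and $\rho_0^d\ge 1$, then gives
\[
\int_X V_{u^d}\,dx\le\int_X\rho_0^d V_{u^d}\,dx\le\int_X l(x,u^d)\rho^d\,dx+M\int_X\rho_T^d\,dx=p_d^\star.
\]

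For (iii), I would construct polynomial feasible tuples whose objective tends to $\int_X V\,dx$. Starting from the Lipschitz feedbacks $u^k$, stopping functions $\tau^k$, and $C^1$ densities $\rho^k$ provided by Assumption~\ref{as:smoothCont}, a Fubini computation using~(\ref{eq:smoothDens2}) gives $\int_X V_{u^k,\tau^k}\,dx=\int_X l(x,u^k)\rho^k\,dx+M\int_X e^{-\beta\tau^k(x_0)}\,dx_0$, which tends to $\int_X V\,dx$ by Assumption~\ref{as:smoothCont}. I would approximate $u^k$ in $C^0$ by a polynomial $\hat u^k$ valued in $U$ (Weierstrass plus clipping), and $\rho^k$ in $C^1$ by a polynomial $\hat\rho^k$ vanishing on $\partial X$ (via $\bar g$ times a nonnegative polynomial approximation, using Corollary~\ref{cor:densNonneg}); then set $\hat\sigma_i:=\hat u^k_i\hat\rho^k$ and $\hat\rho_0:=1+\delta$, and define $\hat\rho_T$ from the Liouville equality. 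For small enough approximation errors and appropriate slack $\delta>0$, all of the nonnegativity constraints become strict, and Putinar's Proposition~\ref{prop:putinar} (via Corollary~\ref{cor:densNonneg}) places the polynomials in the requisite truncated quadratic modules for $d$ sufficiently large. Letting $k\to\infty$ slowly in $d$ then yields a sequence of SOS-feasible tuples with objective converging to $\int_X V\,dx$.

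The main obstacle is ensuring that the polynomial terminal density $\hat\rho_T$ defined by the Liouville equality is strictly positive on $X$. The true terminal measure induced by $(u^k,\tau^k)$ may be singular (not an absolutely continuous function), so $\hat\rho_T$ is not a priori nonnegative on $X^\circ$; balancing the slack $\delta$ in $\hat\rho_0$ against the approximation errors in $\hat u^k$ and $\hat\rho^k$, while exploiting the fact that $\rho^k$ vanishes in an open neighborhood of $\partial X$, is the key quantitative step. A subsidiary technicality in (ii) is that $V_{u^d}$ is only locally Lipschitz (not $C^1$), so the HJB identity and the integration by parts hold only a.e.; this can be made rigorous either by mollification or by a direct chain-rule computation along closed-loop trajectories.
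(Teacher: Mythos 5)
Your overall architecture (admissibility of $u^d$; the sandwich $\int_X V\,dx\le\int_X V_{u^d}\,dx\le p^\star_d$; and $p^\star_d\to\int_X V\,dx$ via explicit polynomial feasible tuples built from Assumption~\ref{as:smoothCont}) is the same as the paper's, and your step (iii) is essentially the paper's construction: it uses Lemma~\ref{lem:polyApprox} and Corollary~\ref{cor:densNonneg} and repairs the Liouville equality by adding small constants to $\rho_0$ and $\rho_T$ (the paper's $\epsilon^k,\omega^k$ shift). Your worry about a singular terminal measure there is unfounded: under Assumption~\ref{as:smoothCont} the weak Liouville equation forces the terminal measure to have the continuous nonnegative density $\rho_T^k=\rho_0^k-\beta\rho^k-\mathrm{div}(\rho^k f)-\sum_i\mathrm{div}(\sigma_i^k f_{u_i})$, so only strict positivity after polynomial approximation needs fixing, which the constant shift does.

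The genuine gap is in your step (ii). You multiply the Liouville identity by $V_{u^d}$, integrate by parts, and invoke a closed-loop Hamilton--Jacobi identity holding a.e., claiming $V_{u^d}$ is locally Lipschitz. Neither ingredient is available: $V_{u^d}=V_{u^d,\tau^\star_{u^d}}$ is the value of a first-exit-time problem and is in general only bounded and measurable --- it can be discontinuous at initial conditions whose closed-loop trajectories graze $\partial X$ (the paper only ever treats it as an element of $L(X;[0,\infty])$) --- so neither the a.e.\ identity in $x$ nor the integration by parts against $\rho^d\,dx$ is justified, and mollification cannot rescue a discontinuous function. Even granting Lipschitz regularity, converting the dynamic-programming identity along individual trajectories into an identity a.e.\ in $x$ weighted by $\rho^d$ requires disintegrating the measure $\rho^d\,dx$ along the closed-loop flow together with its stopping structure; that disintegration is exactly the superposition Theorem~\ref{thm:meas_rep_crucial} (proved in Appendix~B), which the paper uses through Lemma~\ref{lem:costRep} and Corollary~\ref{cor:costUB} to obtain $p(\rho,\rho_0,\rho_T,\sigma)\ge\int_X V_u\rho_0\,dx$ with no regularity assumption on $V_u$. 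A natural attempt to patch your step --- testing the Liouville equation against smooth subsolutions $W$ satisfying $\beta\rho^d W-\nabla W\cdot\hat f\le\hat l$ on $X$ and $W\le M$, which gives $p^\star_d\ge\int_X W\,dx$ --- only shifts the problem: showing that such $W$ can approach $V_{u^d}$ in the integral is the convergence of the lower bounds in Section~\ref{sec:valApprox}, which the paper again proves via Theorem~\ref{thm:meas_rep_crucial} and LP duality. So as written, step (ii) would fail without importing the superposition result or an equivalent substitute.
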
 
 

\section{Rationale behind the LP formulation (\ref{opt:dual_inf}) and proof of the main Theorem~\ref{thm:main}}\label{sec:rationale}
This section explains the rationale behind the LP problem~(\ref{opt:dual_inf}) and its relation to the OCP~(\ref{opt:main}) and gives the proof of Theorem~\ref{thm:main}. First, we lift the original problem~(\ref{opt:main}) into the space of measures with nonnegative densities in $C(X)$; this lifting is problem~(\ref{opt:dual_inf}). Next we tighten the problem by considering only polynomials of prescribed degree and with nonnegativity constraints enforced via SOS conditions; this is problem~(\ref{opt:dual_sos}). Importantly, the lifting~(\ref{opt:dual_inf}) is a \emph{tightening} of the original problem~(\ref{opt:main}) as show in Theorem~\ref{thm:tighten} below. This is in contrast with~\cite{sicon} where the original problem was lifted into the space of measures and this lifting was a \emph{relaxation}.

To be more concrete, observe that any initial measure $\mu_0$, stopping function $\tau \in L(X;[0,\infty])$, and family of trajectories $\{ x(\cdot \!\mid\! x_0)\}_{x_0\in X}$ of~(\ref{eq:sys}) generated by a Lipschitz controller $u\in C(X;U)$ give rise to a triplet of measures defined by
\begin{subequations}\label{eq:meas_def}
\begin{align}
\int_X v(x) d\mu(x) &= \int_{X}   \int_0^{\tau(x_0)} e^{-\beta t}v(x(t\!\mid\!x_0))\,dt \, d\mu_0(x_0), \\
 \int_X v(x) d\mu_T(x) &= \int_{X} e^{-\beta \tau(x_0) }v(x(\tau(x_0)\!\mid\!x_0)) \, d\mu_0(x_0), \\
 \int_X v(x) d\nu_i(x)  &= \int_{X}   \int_0^{\tau(x_0)} e^{-\beta t}v(x(t\!\mid\!x_0))   u_i(x(t\!\mid \! x_0))\,dt\, d\mu_0(x_0).
\end{align}
\end{subequations}
The measure $\mu$ is called \emph{discounted occupation measure}, the measure $\mu_T$ \emph{terminal measure} and the measures $\nu_i$, $i =1,\ldots,m $, \emph{control measures}. These measures satisfy the \emph{discounted Liouville equation}
\begin{equation}\label{eq:discountLiouville_general}
\int_X v \, d\mu_T(x)  =
 \int_X v \, d\mu_0(x) + \int_X ( \nabla v \cdot f - \beta v)\, d\mu(x) + \sum_{i=1}^m\int_X  \nabla v \cdot f_{u_i} \,d\nu_i(x)
\end{equation}
for all $v \in C^1(X)$. This follows by direct computation; see, e.g., \cite{korda_IFAC}. Notice also that $d\nu_i(x) = u_i(x)d\mu(x)$, i.e., $\nu_i$ is absolutely continuous with respect to $\mu$ with Radon-Nikod\'ym derivative equal to $u_i$.

Crucially, the converse statement is also true, although we have to go from stopping functions to stopping measures:
\begin{theorem} [Superposition]\label{thm:meas_rep_crucial}
If measures $\mu$, $\mu_0$, $\mu_T$ and $\nu_i$, $i = 1,\ldots,m$, satisfy~(\ref{eq:discountLiouville_general}) with $\mathrm{spt}\,\mu_0 \subset X$, $\mathrm{spt}\,\mu \subset X$ and $\mathrm{spt}\,\mu_T \subset X$ and $d\nu_i = u_i d\mu$ for some Lipschitz $u \in C(X,U)$, then there exists an ensemble of probability measures (i.e., measures with unit mass) $\{\tau_{x_0}\}_{x_0 \in X }$ and an ensemble of trajectories  $\{ x(\cdot \!\mid\! x_0)\}_{x_0\in X}$
of the system~(\ref{eq:sys}) controlled with $u(t) = u(x(t))$ such that  $x(t\!\mid \! x_0) \in X$ for all $t \in \mathrm{spt}\,\tau_{x_0}$ and
\begin{subequations}\label{eq:meas_traj}
\begin{align}
 \int_X v(x) \,d\mu_0(x) &= \int_{X} v(x(0\!\mid\! x_0)) \,d\mu_0(x_0),\\
 \int_X v(x)\,d\mu(x) &= \int_{X}  \int_0^{\infty} \int_0^{\tau} e^{-\beta t}v(x(t\!\mid\!x_0))\,dt\, d\tau_{x_0}(\tau) \,d\mu_0(x_0), \label{eq:meas_mu} \\
 \int_X v(x)\,d\mu_T(x) &= \int_{X} \int_0^{\infty} e^{-\beta \tau }v(\tau(x_0)) \, d\tau_{x_0}(\tau) \,d\mu_0(x_0), \label{eq:meas_muT}\\
  \int_X v(x)\,d\nu_i(x) &= \int_{X}  \int_0^{\infty} \int_0^{\tau} e^{-\beta t}v(x(t\!\mid\!x_0))   u_i(x(t\!\mid \! x_0))\,dt\, d\tau_{x_0}(\tau) \,d\mu_0(x_0) \label{eq:meas_nu}
\end{align}
\end{subequations}
for all $v \in C^1(X)$.
\end{theorem}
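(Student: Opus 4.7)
The plan is to first reduce~(\ref{eq:discountLiouville_general}) to a closed-loop Liouville identity and then extract $\{\tau_{x_0}\}$ via a disintegration/superposition argument. Substituting $d\nu_i = u_i\,d\mu$ collapses~(\ref{eq:discountLiouville_general}) to
\[
\int_X v\,d\mu_T \;=\; \int_X v\,d\mu_0 + \int_X \bigl(\nabla v\cdot F - \beta v\bigr)\,d\mu \qquad \forall\,v\in C^1(X),
\]
where $F(x):=f(x)+\sum_{i=1}^m u_i(x)f_{u_i}(x)$ is Lipschitz on $X$, because $u$ is Lipschitz and $f,f_{u_i}$ are polynomial and hence smooth on the compact set $X$. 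Consequently the ODE $\dot x=F(x)$ admits a unique trajectory $\phi_t(x_0):=x(t\!\mid\!x_0)$ through every $x_0\in X$, defined up to the exit time $T(x_0):=\inf\{t\ge 0:\phi_t(x_0)\notin X\}\in[0,\infty]$.

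To construct $\tau_{x_0}$, I would lift $\mu$ and $\mu_T$ to the product $X\times[0,\infty]$ by seeking measures $\hat\mu,\hat\mu_T$ whose $x_0$-marginals equal $\mu_0$ and whose pushforwards under the flow map $\Phi(x_0,t):=\phi_t(x_0)$ recover $\mu$ and $\mu_T$. Disintegrating $\hat\mu,\hat\mu_T$ over $\mu_0$ produces fibre measures $\{\alpha_{x_0}\},\{\eta_{x_0}\}$ on $[0,\infty]$, and setting $h(t):=v(\phi_t(x_0))$ recasts the reduced Liouville identity as the fibrewise balance
\[
\int h\,d\eta_{x_0} \;=\; h(0) + \int h'\,d\alpha_{x_0} - \beta\int h\,d\alpha_{x_0}, \qquad h\in C^1([0,\infty]).
\]
A short integration by parts verifies that this balance is satisfied by, and uniquely determines, a probability measure $\tau_{x_0}$ on $[0,\infty]$ together with the identifications $d\eta_{x_0}(\tau)=e^{-\beta\tau}d\tau_{x_0}(\tau)$ and $d\alpha_{x_0}(t)=\tau_{x_0}([t,\infty])\,e^{-\beta t}\,dt$. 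Pushing forward by $\Phi$ then recovers~(\ref{eq:meas_mu}) and~(\ref{eq:meas_muT}); the identity $d\nu_i=u_i\,d\mu$ yields~(\ref{eq:meas_nu}); and the representation of $\mu_0$ is tautological. The support hypotheses $\mathrm{spt}\,\mu,\mathrm{spt}\,\mu_T\subset X$ confine $\mathrm{spt}\,\tau_{x_0}$ to $[0,T(x_0)]$.

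The main obstacle will be the rigorous construction of $\hat\mu,\hat\mu_T$ and the measurability of $x_0\mapsto\tau_{x_0}$, in the presence of the discount source $-\beta v$, of trajectories overlapping in $X$ but originating from different $x_0$, and of trajectories exiting $X$ in finite time. I would resolve overlap by carrying $x_0$ as an explicit label, i.e., disintegrating on $X\times[0,\infty]$ rather than on $X$; absorb escape into $\mu_T$ via the support hypotheses; and remove the discount by augmenting the state with $y:=e^{-\beta t}$ satisfying $\dot y=-\beta y$, $y(0)=1$, so that the reduced Liouville identity becomes a classical conservative continuity equation on $X\times(0,1]$ driven by the Lipschitz field $(F(x),-\beta y)$. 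A superposition principle of Ambrosio--DiPerna--Lions type, applied to this extended continuity equation, then delivers the kernel $\{\tau_{x_0}\}_{x_0\in X}$ with all the measurability properties needed to close the argument.
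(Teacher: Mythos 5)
Your reduction to the closed-loop Lipschitz field $F=f+\sum_i u_i f_{u_i}$, and your fibrewise identification of $\tau_{x_0}$ (with $d\eta_{x_0}=e^{-\beta\tau}d\tau_{x_0}$ and $\alpha_{x_0}$ having the survival-function density $\tau_{x_0}([t,\infty])e^{-\beta t}$) are sound, and they match what the stopping kernel must look like. The genuine gap is the step you defer to ``a superposition principle of Ambrosio--DiPerna--Lions type'': the objects to which you would apply it, the lifted measures $\hat\mu,\hat\mu_T$ on $X\times[0,\infty]$ (equivalently the joint law of initial condition and elapsed time, or of $(x,y)$ with $y=e^{-\beta t}$), are not given by the hypotheses --- only their $x$-marginals $\mu,\mu_T$ are. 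The reduced Liouville identity constrains test functions only of the special form $\hat v(x,y)=y\,v(x)$, so it does not by itself assert that any measure on the augmented space solves the extended continuity equation; constructing such a lift compatible with disintegration over $\mu_0$ is essentially equivalent to constructing the superposition you are trying to prove exists, so the argument is circular at its key point. Moreover, the ADPL-type results you cite are stated for the time-dependent Cauchy problem $\partial_t\mu_t+\mathrm{div}(b\mu_t)=0$ given the whole curve $t\mapsto\mu_t$ (which you do not have), not for the stationary discounted (``killed'') equation $\mathrm{div}(F\mu)+\beta\mu=\mu_0-\mu_T$ with a free terminal measure; and the natural undiscounted lift has infinite mass on the fibers where trajectories never leave $X$ (the $\delta_\infty$ part of $\tau_{x_0}$), so finite-mass stationary superposition results do not apply off the shelf. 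The paper itself flags that even extending its theorem ``in the spirit of'' the finite-time superposition of Ambrosio--Crippa is nontrivial and out of scope, which is a hint that this route cannot be closed by citation.

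For contrast, the paper's proof avoids any lift: it exploits uniqueness of the Lipschitz flow through the explicit resolvent formula $v(x_0)=-\int_0^\infty e^{-\beta t}w(x(t\,|\,x_0))\,dt$ solving $\nabla v\cdot \bar f-\beta v=w$ (Lemma~\ref{lem:transPortEq_sol}), and then builds the stopping measures by hand: first $\mu_0=\delta_{x_0}$ with $\mu_T$ a single Dirac (showing via nonnegative test functions $w$ that $x_T$ lies on the trace through $x_0$ and that its mass satisfies $a\le e^{-\beta\tau}$, which yields $\tau_{x_0}=ae^{\beta\tau}\delta_\tau+(1-ae^{\beta\tau})\delta_\infty$), then countable sums of Diracs, then arbitrary $\mu_T$, and finally general $\mu_0$ by approximation with Diracs. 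If you want to salvage your approach, you would need to actually prove a stationary-with-killing superposition/lifting theorem (e.g., by duality or by an approximation scheme), which is at least as much work as the direct argument; otherwise the construction of $\hat\mu,\hat\mu_T$ remains an unproved assumption on which everything else rests.
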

\begin{proof}
See Appendix~B.
\end{proof}

\begin{remark}[Interpretation of Theorem~2] Theorem~2 says that any measures satisfying~(\ref{eq:discountLiouville_general}) are generated by a superposition of the trajectories of the dynamical system $\dot{x} = f(x) + \sum_{i=1}^m f_{u_i}(x)u_i(x)$, where the superposition is over the final time of the trajectories. Note that there is a unique trajectory corresponding to each initial condition (since the vector field $f(x) + \sum_{i=1}^m f_{u_i}(x)u_i(x)$ is Lipschitz) but this unique trajectory can be stopped at multiple times (in fact at a whole continuum of times) allowing for superposition; this superposition is captured by the stopping measures $\{\tau_{x_0}\}_{x_0 \in X }$. For example, if the $\tau_{x_0}$ is a Dirac measure at a given time, then there is no superposition; if $\tau_{x_0}$ has a discrete distribution, then there is a superposition of finitely or countably many overlapping trajectories starting at $x_0$ stopped at different time instances; if $\tau_{x_0}$ has a continuous distribution then there is a superposition of a continuum of overlapping trajectories starting from $x_0$ stopped at different time instances.
\end{remark}

If in addition the measures $\mu_0$, $\mu$, $\mu_T$ satisfying the discounted Liouville equation (\ref{eq:discountLiouville_general}) are absolutely continuous with respect to the Lebesgue measure with densities $\rho_0 \in C(X)$, $\rho \in C^1(X)$, $\rho_T \in C(X)$ such that $\rho = 0$ on $\partial X$, then these densities satisfy
\begin{equation}\label{eq:disc_Liouville_functional}
\rho_T - \rho_0 + \beta\rho +\mathrm{div}(f \rho) + \sum_{i=1}^m \mathrm{div}(f_{u_i}\sigma_i) = 0
\end{equation}
with $\sigma_i = u_i \rho$, $i=1,\ldots,m$. This follows directly by substituting $d\mu_0 = \rho_0dx$, $d\mu = \rho dx$, $d\mu_T = \rho_Tdx$ and $d\nu_i = u_i d\mu = u_i \rho dx = \sigma_i dx$ in~(\ref{eq:discountLiouville_general}) and using integration by parts. Equation~(\ref{eq:disc_Liouville_functional}) holds almost everywhere in $X$ with a Lipschitz controller $u$, since Lipschitz functions are differentiable almost everywhere and the integration by parts formula applies to them, and everywhere with $u \in C^1(X;U)$.

\begin{remark}[Role of the terminal measure] \label{rem:termMeasure} An important feature of the SOS tightenings~(\ref{opt:dual_sos}) is that they are feasible for arbitrarily low degrees (see Remark~\ref{rem:feasibility}), which is crucial from a practical point of view and is not satisfied with other, more obvious, formulations (e.g., those not involving a stopping function in~(\ref{opt:main})); the reason for this is that, in the absence of a terminal measure (i.e., $\rho_T = 0$), the discounted Liouville equation~(\ref{eq:disc_Liouville_functional}) may not have a solution with a polynomial $\rho$ even though $\rho_0$ and the dynamics are polynomial. Indeed, for example with $f = -x$, $f_{u_i} = 0$, $\beta = 1$, $\rho_0 = 1$ on $X = [-1,1]$ and zero elsewhere, the only solution to~(\ref{eq:disc_Liouville_functional}) with $\rho_T = 0$ is $\rho(x) = -\mathrm{ln}(|x|)$.
\end{remark}

Theorem~\ref{thm:meas_rep_crucial} immediately enables us to prove a representation of the cost of problem~(\ref{opt:dual_inf}) in terms of trajectories of~(\ref{eq:sys}).
\begin{lemma}\label{lem:costRep}
If $(\rho,\rho_0,\rho_T,\sigma)$ is feasible in~(\ref{opt:dual_inf}) and $u = \sigma / \rho$, then
\begin{align}
 p(\rho,\rho_0,\rho_T,{\sigma}) &= \int_X \int_0^{\infty} \int_0^{\tau} e^{-\beta t} l_x(x(t \! \mid \! x_0)) dt \, d\tau_{x_0}(\tau) \rho_0(x_0) \,dx_0 \nonumber \\ & \hspace{1cm} + \sum_{i=1}^m\int_X \int_0^{\infty}  \int_0^{\tau} e^{-\beta t} l_{u_i}(x(t \!\mid\! x_0))u_i(x(t \! \mid \! x_0))\, dt \,  d\tau_{x_0}(\tau) \rho_0(x_0)dx_0 \nonumber \\ & \hspace{1cm} + M  \int_X \int_0^{\infty} e^{-\beta\tau}\, d\tau_{x_0}(\tau) \rho_0(x_0) dx_0, \label{eq:costRep} 
 \end{align}
 where $x(\cdot\! \mid\! x_0)$ are trajectories of~(\ref{eq:sys}) controlled by $u(t) = u(x(t))$ and $\tau_{x_0}$ are stopping probability measures with support $\mathrm{spt}\,\tau_{x_0}$ included in $[0,\infty]$. Moreover the state-control trajectories $x(\cdot \! \mid\! x_0)$ and $u(x(\cdot \!\mid\! x_0))$ are feasible in~(\ref{opt:main}) in the sense that $x(t \! \mid\! x_0) \in X$ and $u(t \!\mid\! x_0)\in U$ for all $t\in \mathrm{spt}\,\tau_{x_0}$.
\end{lemma}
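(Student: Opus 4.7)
The plan is to convert the density formulation into a measure formulation, verify the hypotheses of the Superposition Theorem (Theorem~\ref{thm:meas_rep_crucial}), apply it to obtain trajectory-based representations of the measures, and finally substitute back into the cost of~(\ref{opt:dual_inf}).

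First, I would associate measures to the densities by setting $d\mu_0 = \rho_0\,dx$, $d\mu = \rho\,dx$, $d\mu_T = \rho_T\,dx$ and $d\nu_i = \sigma_i\,dx$, all supported in $X$. The feasibility constraints of~(\ref{opt:dual_inf}) ensure these are nonnegative measures: $\rho_0 \ge 1$, $\rho_T \ge 0$, $\sigma_i \ge 0$ on $X$, and $\rho \ge 0$ on $X$ follows from $\bar u \rho \ge \sigma_i \ge 0$ (with $\bar u > 0$). Moreover, the constraint $\rho \le 0$ on $\partial X$ combined with $\rho \ge 0$ on $X$ gives $\rho = 0$ on $\partial X$, and then $0 \le \sigma_i \le \bar u \rho = 0$ on $\partial X$ gives $\sigma_i = 0$ there as well. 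Setting $u_i := \sigma_i/\rho$ yields a well-defined Radon-Nikod\'ym derivative $\mu$-a.e., and the pointwise bound $\sigma_i \le \bar u \rho$ gives $u_i \in [0,\bar u] = U$ on $\mathrm{spt}\,\mu$.

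Second, I would verify that these measures satisfy the integral Liouville equation~(\ref{eq:discountLiouville_general}). Multiply~(\ref{eq:disc_Liouville_functional}) by an arbitrary $v \in C^1(X)$, integrate over $X$, and apply the divergence theorem. The identity $\int_X v\,\mathrm{div}(\rho f)\,dx = -\int_X \nabla v \cdot f\,\rho\,dx + \int_{\partial X} v \rho (f\cdot n)\,dS$, and the analogous identity for each $\sigma_i f_{u_i}$, have vanishing boundary terms because $\rho$ and $\sigma_i$ vanish on $\partial X$. What remains is exactly~(\ref{eq:discountLiouville_general}).

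Third, I would invoke Theorem~\ref{thm:meas_rep_crucial} with the feedback $u = \sigma/\rho$, obtaining stopping probability measures $\{\tau_{x_0}\}_{x_0 \in X}$ and trajectories $\{x(\cdot\!\mid\! x_0)\}_{x_0 \in X}$ of~(\ref{eq:sys}) controlled by $u(t) = u(x(t))$ with $x(t\!\mid\! x_0)\in X$ for all $t\in \mathrm{spt}\,\tau_{x_0}$, together with the representations~(\ref{eq:meas_traj}). Plugging these into the objective
\[
p(\rho,\rho_0,\rho_T,\sigma) = \int_X l_x\,d\mu + \sum_{i=1}^m \int_X l_{u_i}\,d\nu_i + M\int_X 1\,d\mu_T,
\]
using~(\ref{eq:meas_mu}) with $v = l_x$, (\ref{eq:meas_nu}) with $v = l_{u_i}$, (\ref{eq:meas_muT}) with $v=1$, and $d\mu_0 = \rho_0\,dx_0$, delivers~(\ref{eq:costRep}) term by term. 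The feasibility assertions $x(t\!\mid\! x_0)\in X$ and $u(t\!\mid\! x_0)\in U$ on $\mathrm{spt}\,\tau_{x_0}$ follow from Theorem~\ref{thm:meas_rep_crucial} and from $u \in U$ on $\mathrm{spt}\,\mu$.

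The main obstacle is the regularity gap between the hypothesis of Theorem~\ref{thm:meas_rep_crucial}, which requires a Lipschitz feedback $u\in C(X;U)$, and the fact that $u = \sigma/\rho$ is only rational and may fail to be Lipschitz where $\rho$ vanishes (in particular on $\partial X$). I would address this by restricting attention to $X^\circ$ where $\rho > 0$ (so that $u$ is smooth), and arguing via the support properties of the occupation measure that the relevant trajectories do not reach the zero set of $\rho$ before being stopped, so that the flow is well-defined and the superposition argument applies on the sets where it matters. If needed this can be made rigorous by a standard mollification/truncation argument on $u$ followed by a limit in the representation formula.
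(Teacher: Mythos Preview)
Your argument is essentially the same as the paper's: define the measures from the densities, use $\rho=0$ on $\partial X$ to pass from the functional Liouville equation~(\ref{eq:disc_Liouville_functional}) to the weak one~(\ref{eq:discountLiouville_general}) via integration by parts, invoke Theorem~\ref{thm:meas_rep_crucial}, and substitute $v=l_x$, $v=l_{u_i}$, $v=1$ into~(\ref{eq:meas_mu}), (\ref{eq:meas_nu}), (\ref{eq:meas_muT}).

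One remark: the regularity issue you raise (that $u=\sigma/\rho$ need not be Lipschitz on $X$ since $\rho$ vanishes on $\partial X$) is genuine, but the paper does not address it either; it simply asserts $u_i\in C^1(X;U)$ and applies the Superposition Theorem directly. So your proposal is, if anything, slightly more careful than the original on this point, and your suggested workaround (restrict to the support of $\mu$ where $\rho>0$ and argue the flow never reaches the zero set of $\rho$ before being stopped) is a reasonable way to close the gap that the paper leaves implicit.
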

\begin{proof}
Let $(\rho,\rho_0,\rho_T,{\sigma})$ be feasible in (\ref{opt:dual_inf}) and let $p(\rho,\rho_0,\rho_T,{\sigma})$ denote the value attained by $(\rho,\rho_0,\rho_T,{\sigma})$ in~(\ref{opt:dual_inf}). The equality constraint of~(\ref{opt:dual_inf}) is exactly~(\ref{eq:disc_Liouville_functional}). Since the constraint of~(\ref{opt:dual_inf}) implies $\rho = 0$ on $\partial X$, equation (\ref{eq:discountLiouville_general}) holds with $d\mu_0 = \rho_0dx$, $d\mu = \rho dx$, $d\mu_T = \rho_Tdx$ and $d\nu_i = u_i d\mu = u_i \rho dx = \sigma_i dx$, where $u_i = \frac{\sigma_i}{\rho} \in C^1(X;U)$, $i=1,\ldots,m$. By Theorem~\ref{thm:meas_rep_crucial} (setting $v(x) = l_x(x)$ in~(\ref{eq:meas_mu}), $v(x) = 1$ in~(\ref{eq:meas_muT}) and $v(x) = l_{u_i}(x)$ in (\ref{eq:meas_nu})) we obtain the result (noticing that the constraints of~(\ref{opt:dual_inf}) imply that $u(x) \in U$ for all $x\in X$).
\end{proof}
\begin{corollary}\label{cor:costUB}
If $(\rho,\rho_0,\rho_T,\sigma)$ is feasible in~(\ref{opt:dual_inf}) and $u = \sigma / \rho$, then
\begin{equation}\label{eq:costUB}
p(\rho,\rho_0,\rho_T,{\sigma}) \ge \int_X V_u(x_0)\rho_0(x_0)dx_0.
\end{equation}
If in addition the stopping measures $\{\tau_{x_0}\}_{x_0 \in X}$ in~(\ref{eq:costRep}) are equal to the Dirac measures $\{\delta_{\tau(x_0)}\}_{x_0 \in X}$ for some stopping function $\tau \in L(X;[0,\infty])$, then
\begin{equation}\label{eq:costExact}
p(\rho,\rho_0,\rho_T,{\sigma}) = \int_X V_{u,\tau}(x_0)\rho_0(x_0)dx_0.
\end{equation}
\end{corollary}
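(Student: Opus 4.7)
The plan is to deduce the corollary directly from the trajectory representation established in Lemma~\ref{lem:costRep}. Starting from~(\ref{eq:costRep}), I would fix $x_0 \in X$ and, for each fixed terminal time $\tau \in [0,\infty]$, recognize the bracketed expression
\[
\int_0^\tau e^{-\beta t} l_x(x(t\!\mid\!x_0))\,dt + \sum_{i=1}^m\int_0^\tau e^{-\beta t} l_{u_i}(x(t\!\mid\!x_0))u_i(x(t\!\mid\!x_0))\,dt + M e^{-\beta\tau}
\]
as precisely the cost accrued in OCP~(\ref{opt:main}) by the admissible feedback $u = \sigma/\rho$ (admissibility of $u(x)\in U$ follows from the constraints $\bar u\rho \ge \sigma_i$ and $\sigma_i\ge 0$ in~(\ref{opt:dual_inf})) together with the constant-value stopping time $\tau$. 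By the definition of $V_{u,\tau}$, this bracket is exactly $V_{u,\tilde\tau}(x_0)$ for any measurable stopping function $\tilde\tau$ with $\tilde\tau(x_0)=\tau$.

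Next, since $V_u(x_0) = V_{u,\tau_u^\star}(x_0)$ is the pointwise infimum of $V_{u,\tilde\tau}(x_0)$ over admissible stopping functions, and any constant value $\tau\in[0,\infty]$ can be realized at $x_0$ by some admissible stopping function, we have $V_{u,\tilde\tau}(x_0) \ge V_u(x_0)$. Because $\tau_{x_0}$ is a probability measure on $[0,\infty]$, integrating this inequality yields
\[
\int_0^\infty \Bigl[\,\text{bracketed expression}\,\Bigr]\, d\tau_{x_0}(\tau) \;\ge\; V_u(x_0)\int_0^\infty d\tau_{x_0}(\tau) \;=\; V_u(x_0).
\]
Rewriting~(\ref{eq:costRep}) (via Fubini, whose hypotheses hold because $l_x$, $l_{u_i}$, $u_i$ are bounded on $X$, $\rho_0\in C(X)$, and $x(\cdot\mid x_0)$ is jointly measurable by Lipschitz dependence on initial condition) as an outer integral in $x_0$ of the above inner $d\tau_{x_0}$-integral, then multiplying by $\rho_0(x_0)\ge 0$ and integrating over $X$, delivers the inequality~(\ref{eq:costUB}).

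For the equality assertion, specialize to $\tau_{x_0} = \delta_{\tau(x_0)}$ for a measurable stopping function $\tau \in L(X;[0,\infty])$. Integration against a Dirac mass collapses the innermost $d\tau_{x_0}$ integral to evaluation at $\tau(x_0)$, so~(\ref{eq:costRep}) reduces to
\[
p(\rho,\rho_0,\rho_T,\sigma) = \int_X \!\Bigl[\int_0^{\tau(x_0)}\! e^{-\beta t}\bigl(l_x(x(t\!\mid\!x_0)) + \!\sum_i l_{u_i}(x(t\!\mid\!x_0))u_i(x(t\!\mid\!x_0))\bigr)dt + Me^{-\beta\tau(x_0)}\Bigr]\rho_0(x_0)\,dx_0,
\]
and by the very definition of $V_{u,\tau}(x_0)$ the bracket equals $V_{u,\tau}(x_0)$, giving~(\ref{eq:costExact}). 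There is no real obstacle here beyond the bookkeeping step of identifying the bracket with $V_{u,\tau}$ and invoking the unit-mass property of $\tau_{x_0}$; the only mildly technical point is the measurability of the bracketed quantity in $x_0$ needed to justify the order of integration, which is automatic from Lipschitz dependence of the flow on initial data.
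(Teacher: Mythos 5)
Your proposal is correct and follows essentially the same route as the paper: apply the representation of Lemma~\ref{lem:costRep}, use feasibility of the state-control trajectories and the unit mass of the stopping measures $\tau_{x_0}$ to bound the inner cost below by $V_u(x_0)$, and let the Dirac measures collapse the inner integral to obtain the equality~(\ref{eq:costExact}). The extra remarks on Fubini and measurability are harmless elaborations of what the paper leaves implicit.
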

\begin{proof}
Let $(\rho,\rho_0,\rho_T,{\sigma})$ be feasible in (\ref{opt:dual_inf}). Using Lemma~\ref{lem:costRep}, $p(\rho,\rho_0,\rho_T,{\sigma})$ has representation~(\ref{eq:costRep}), where the state-control trajectories in~(\ref{eq:costRep}) are feasible in~(\ref{opt:main}). Since the measures $\tau_{x_0}$ in~(\ref{eq:costRep}) have unit mass for all $x_0\in X$, we obtain~(\ref{eq:costUB}). If $\tau_{x_0} = \delta_{\tau(x_0)}$ for some stopping function $\tau \in L(X;[0,\infty])$, then the integrals with respect to~$\tau_{x_0}$ in~(\ref{eq:costRep}) become evaluations at $\tau(x_0)$ and hence (\ref{eq:costExact}) holds.
\end{proof}

Corollary~\ref{cor:costUB} immediately implies that the problem~(\ref{opt:dual_inf}) (and hence problem~(\ref{opt:dual_sos})) is a tightening of the original problem~(\ref{opt:main}):

\begin{theorem}\label{thm:tighten}
The optimal value of~(\ref{opt:dual_inf}) of $p^\star$ satisfies
\begin{equation}\label{eq:ineq}
p^* \ge\int_X V(x)\,dx.
\end{equation} 
\end{theorem}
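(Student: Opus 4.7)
The plan is to derive the inequality directly from Corollary~\ref{cor:costUB}, using the constraint $\rho_0 \ge 1$ on $X$ from problem~(\ref{opt:dual_inf}) together with the fact that $V_u \ge V$ pointwise on $X$ for every feedback controller $u$.

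More concretely, I would proceed as follows. First, fix an arbitrary feasible tuple $(\rho,\rho_0,\rho_T,\sigma)$ in~(\ref{opt:dual_inf}) and define the rational feedback $u := \sigma/\rho$. Since the constraints $\bar{u}\rho \ge \sigma_i$ and $\sigma_i \ge 0$ on $X$ are active, we automatically have $u(x)\in U$ for all $x\in X$ where $\rho(x)>0$, so the hypotheses of Corollary~\ref{cor:costUB} apply, yielding
\begin{equation*}
p(\rho,\rho_0,\rho_T,\sigma) \;\ge\; \int_X V_u(x_0)\,\rho_0(x_0)\,dx_0.
\end{equation*}

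Next, since $V$ is the infimum in OCP~(\ref{opt:main}) over \emph{all} admissible input/stopping pairs and $V_u = V_{u,\tau_u^\star}$ corresponds to one such feasible choice, we have $V_u(x_0) \ge V(x_0)$ for all $x_0\in X$. Moreover, because the joint stage cost $l$ is nonnegative on $X\times U$ and $M>0$, the integrand in~(\ref{opt:main}) is nonnegative, hence $V \ge 0$ on $X$. Combining these observations with the constraint $\rho_0 \ge 1$ on $X$, we obtain
\begin{equation*}
\int_X V_u(x_0)\,\rho_0(x_0)\,dx_0 \;\ge\; \int_X V(x_0)\,\rho_0(x_0)\,dx_0 \;\ge\; \int_X V(x_0)\,dx_0.
\end{equation*}
Chaining the two displays gives $p(\rho,\rho_0,\rho_T,\sigma)\ge \int_X V(x)\,dx$ for every feasible tuple, and taking the infimum over all feasible tuples yields $p^\star \ge \int_X V(x)\,dx$.

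There is no real obstacle here: the substantive machinery (the superposition theorem, the cost representation lemma, and the resulting lower bound on $p$ in terms of $V_u$) has already been established. The only minor points to be careful about are (i) verifying that $V\ge 0$ so that weighting by $\rho_0\ge 1$ can be cleanly dropped, and (ii) noting that $u(x)\in U$ almost everywhere on $X$ follows from the inequality constraints $0\le \sigma_i\le \bar{u}\rho$, so the rational feedback is indeed $U$-valued and Corollary~\ref{cor:costUB} is legitimately applicable.
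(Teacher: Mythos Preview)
Your proposal is correct and follows exactly the same route as the paper's own proof, which simply reads ``Follows from Corollary~\ref{cor:costUB} since $\rho_0 \ge 1$ and $V_u \ge V \ge 0$.'' You have merely spelled out the details of this one-line argument.
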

\begin{proof}
Follows from Corollary~\ref{cor:costUB} since $\rho_0 \ge 1$ and $V_u \ge V \ge 0$.
\end{proof}

Now we are in a position to prove the following crucial lemma linking problems (\ref{opt:main}) and (\ref{opt:dual_inf}).
\begin{lemma}\label{lem:conv}
If $\{u^k \in C(X;U)\}_{k=1}^{\infty}$ and $\{\tau^k\in L(X;[0,\infty])\}_{k=1}^{\infty}$ are respectively sequences of controllers and stopping functions satisfying the conditions of Assumption~\ref{as:smoothCont}, then the corresponding densities $\{\rho^k,\rho_0^k,\rho_T^k,{\sigma}^k\}_{k = 1}^\infty$ with $\rho_0^k = 1$ are feasible in~(\ref{opt:dual_inf}) and satisfy
\begin{equation}\label{eq:optEq}
\lim_{k\to \infty }p(\rho^k,\rho_0^k,\rho_T^k,{\sigma}^k) =  \int_X V(x_0)dx_0.
\end{equation}

Conversely, if $\{\rho^k,\rho_0^k,\rho_T^k,{\sigma}^k\}_{k = 1}^\infty$ is a sequence such that $\lim_{k\to\infty} p(\rho^k,\rho_0^k,\rho_T^k,{\sigma}^k) =p^\star$ and if Assumption~(\ref{as:smoothCont}) holds, then equation~(\ref{eq:valConv}) holds with $u^k= {\sigma}^k / \rho^k$.
\end{lemma}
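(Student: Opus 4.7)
The plan is to treat the two directions separately. The forward direction is constructive: build densities from Assumption~\ref{as:smoothCont} and verify feasibility in~(\ref{opt:dual_inf}); the converse is a squeeze using Corollary~\ref{cor:costUB} together with the equality $p^\star = \int_X V(x_0)\,dx_0$ that follows from combining the forward direction with Theorem~\ref{thm:tighten}.

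For the forward direction, given $\{u^k,\tau^k,\rho^k\}$ from Assumption~\ref{as:smoothCont}, I would set $\rho_0^k := 1$, $\sigma_i^k := u_i^k\rho^k$, and \emph{define} $\rho_T^k$ so that the Liouville equality in~(\ref{opt:dual_inf}) holds by construction:
\[
\rho_T^k := 1 - \beta\rho^k - \mathrm{div}(f\rho^k) - \sum_{i=1}^m \mathrm{div}(f_{u_i}\sigma_i^k).
\]
All inequality constraints except $\rho_T^k \ge 0$ are then immediate: $\rho^k = 0$ on $\partial X$ by Assumption~\ref{as:smoothCont}; $\rho_0^k \equiv 1$; $\rho^k \ge 0$ because it is a continuous density of a nonnegative occupation measure; and $0 \le u_i^k \le \bar{u}$ handles both $\sigma_i^k \ge 0$ and $\bar{u}\rho^k - \sigma_i^k \ge 0$. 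For the remaining constraint, I would apply~(\ref{eq:discountLiouville_general}) to the measures built through~(\ref{eq:meas_def}) from $(u^k,\tau^k)$ with $d\mu_0 = dx$, substitute $d\mu^k = \rho^k\,dx$ and $d\nu_i^k = \sigma_i^k\,dx$, and integrate by parts; the boundary terms vanish because $\rho^k$ (and hence each $\sigma_i^k$) is zero in a neighborhood of $\partial X$. This identifies $\rho_T^k\,dx$ with the a priori nonnegative terminal measure $\mu_T^k$, and continuity of $\rho_T^k$ then forces $\rho_T^k \ge 0$ pointwise.

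With feasibility in hand, substituting the identifications $\rho^k\,dx = d\mu^k$, $\sigma_i^k\,dx = d\nu_i^k$, $\rho_T^k\,dx = d\mu_T^k$ into the objective of~(\ref{opt:dual_inf}) and unfolding~(\ref{eq:meas_def}) gives $p(\rho^k,\rho_0^k,\rho_T^k,\sigma^k) = \int_X V_{u^k,\tau^k}(x_0)\,dx_0$, so~(\ref{eq:valConv}) from Assumption~\ref{as:smoothCont} delivers~(\ref{eq:optEq}). For the converse, we now know $p^\star = \int_X V(x_0)\,dx_0$, so given any sequence with $p(\rho^k,\rho_0^k,\rho_T^k,\sigma^k) \to p^\star$, Corollary~\ref{cor:costUB} together with $\rho_0^k \ge 1$ and $V_{u^k} \ge V \ge 0$ yields
\[
p(\rho^k,\rho_0^k,\rho_T^k,\sigma^k) \;\ge\; \int_X V_{u^k}\rho_0^k\,dx_0 \;\ge\; \int_X V_{u^k}\,dx_0 \;\ge\; \int_X V\,dx_0 \;=\; p^\star,
\]
and the assumed convergence of the left-hand side collapses every inequality to an equality. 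Since $V_{u^k} \ge V$ pointwise, this is exactly~(\ref{eq:valConv}) with $\tau^k := \tau_{u^k}^\star$, for which $V_{u^k,\tau^k} = V_{u^k}$.

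The step I expect to be the main obstacle is justifying $\rho_T^k \ge 0$ in the forward direction: Assumption~\ref{as:smoothCont} supplies a smooth density only for the occupation measure, so nonnegativity of the terminal density has to be recovered by the measure-theoretic identification sketched above. A secondary technical nuisance is that $\sigma_i^k = u_i^k\rho^k$ is only Lipschitz (not $C^1$) when $u^k$ is Lipschitz, so the regularity classes in~(\ref{opt:dual_inf}) must be read in the weak sense flagged by the authors in the discussion following Assumption~\ref{as:smoothCont}, and the integration by parts must be invoked in its Lipschitz version.
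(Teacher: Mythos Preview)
Your proposal is correct and follows essentially the same route as the paper's own proof: the same construction of $(\rho^k,\rho_0^k,\rho_T^k,\sigma^k)$ in the forward direction, the same identification $p(\rho^k,\ldots)=\int_X V_{u^k,\tau^k}\,dx_0$ (the paper routes this through Lemma~\ref{lem:costRep}/Corollary~\ref{cor:costUB} rather than unfolding~(\ref{eq:meas_def}) directly, but this is the same computation), and the same squeeze via Theorem~\ref{thm:tighten} and Corollary~\ref{cor:costUB} for the converse. If anything you are more careful than the paper, which simply asserts feasibility without isolating the $\rho_T^k\ge 0$ step or the Lipschitz-vs-$C^1$ regularity issue you flag.
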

\begin{proof}
To prove the first part of the statement consider the controllers $u^k$, stopping functions $\tau^k$ and densities $\rho^k$ from Assumption~(\ref{as:smoothCont}). Setting $\rho_0^k = 1$ and defining $\sigma_i^k := u_i^k \rho^k$ and $\rho_T^k : = \rho_0^k - \beta\rho^k -\mathrm{div}(\rho^k f) - \sum_{i=1}^m \mathrm{div}(f_{u_i}\sigma_i^k) = 0 $ we see that $(\rho^k,\rho_0^k,\rho_T^k,\sigma^k)$ satisfy~(\ref{eq:disc_Liouville_functional}) with $\rho^k = 0$ on $\partial X$. Therefore~$(\rho^k,\rho_0^k,\rho_T^k,\sigma^k)$ are feasible in~(\ref{opt:dual_inf}). In addition, in view of~(\ref{eq:smoothDens2}), the representation~(\ref{eq:costRep}) holds with $\tau_{x_0} = \delta_{\tau^k(x_0)}$. Therefore by Lemma~\ref{lem:costRep}
\[
 p(\rho^k,\rho_0^k,\rho_T^k,{\sigma}^k) = \int_X V_{u^k,\tau^k}(x_0)\rho^k_0(x_0)dx_0
 \]
and hence~(\ref{eq:optEq}) holds since $\{V_{u^k,\tau^k}\}_{k=1}^\infty$ satisfies~(\ref{eq:valConv}) and $\rho_0^k = 1$ for all $k \ge 0$.

To prove the second part of the statement, let $\{\rho^k,\rho_0^k,\rho_T^k,{\sigma}^k\}_{k=1}^\infty$ be any sequence such that $\lim_{k\to\infty} p(\rho^k,\rho_0^k,\rho_T^k,{\sigma}^k) =p^\star$. Then this sequence satisfies~(\ref{eq:optEq}) by Theorem~\ref{thm:tighten} and by the first part of Lemma~\ref{lem:conv} just proven. Therefore~(\ref{eq:valConv}) holds with $u^k : = \sigma^k/\rho^k$ since
\[
p(\rho^k,\rho_0^k,\rho_T^k,{\sigma}^k) \ge \int_X V_{u^k}(x_0)dx_0
\]
by Corollary~\ref{cor:costUB}.
\end{proof}

We will also need the following result showing that nonnegative $C^1$ functions vanishing on a neighborhood of $\partial X$ can be approximated by polynomials in $Q_d(K)$ vanishing on $\partial X$.
\begin{lemma}\label{lem:polyApprox}
Let $\rho \in C^1(X)$ such that $\rho\geq 0$ on $X$ and $\rho=0$ on $\{x \in X \: : \: \mathrm{dist}_{\partial X}(x) < \zeta\}$ for some $\zeta > 0$. Then for any $\epsilon > 0$ there exists $d \ge 0$ and a polynomial $p_d \in \bar{g}Q_{d-\mathrm{deg}\,\bar{g}}(X)$ such that
\begin{equation*}
\| \rho - p_d \|_{C^1(X)} < \epsilon
\end{equation*}
and $p_d = 0$ on $\partial X$.
\end{lemma}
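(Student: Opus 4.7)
The plan is to factor $\rho=\bar{g}h$ for a nonnegative $C^1$ function $h$, approximate $h$ in $C^1$ norm by a strictly positive polynomial, upgrade to an SOS certificate via Putinar, and multiply by $\bar{g}$ to obtain $p_d$.

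First I would define $h(x):=\rho(x)/\bar{g}(x)$ on $X^\circ$ and $h(x):=0$ on $\partial X$. Since $\rho$ is $C^1$ and vanishes identically on the open set $\{x\in X:\mathrm{dist}_{\partial X}(x)<\zeta\}$, so does $\nabla\rho$, and the quotient $\rho/\bar{g}$ (well defined on $X^\circ$ by~(\ref{eq:barg})) extends by $0$ across $\partial X$ with value \emph{and} gradient equal to zero on the full $\zeta$-neighborhood of $\partial X$. Hence $h\in C^1(X)$, $h\geq 0$ on $X$, and $\rho=\bar{g}h$ pointwise on $X$.

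Next I would $C^1$-approximate $h$ by an element of $Q_{d'}(X)$. Extending $h$ by zero to all of $\mathbb{R}^n$ produces $\tilde h\in C^1_c(\mathbb{R}^n)$, because the two zero branches on each side of $\partial X$ match smoothly (value and gradient). Fixing a box $B\supset X$ and invoking a standard $C^1$ polynomial approximation theorem on a cube (for instance, tensor-product Bernstein polynomials of $C^1$ functions), for any $\eta>0$ there is a polynomial $q$ with $\|\tilde h-q\|_{C^1(B)}<\eta$, hence $\|h-q\|_{C^1(X)}<\eta$. Setting $r:=q+2\eta$ yields $r\geq h+\eta>0$ on $X$ and $\|h-r\|_{C^1(X)}\leq 3\eta$. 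The Archimedean condition $N-\|x\|^2\in Q_d(X)$ is assumed to hold, so Proposition~\ref{prop:putinar} provides an integer $d'$ with $r\in Q_{d'}(X)$.

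Finally I would take $p_d:=\bar{g}\,r$ with $d:=d'+\deg\bar{g}$. Then $p_d\in \bar{g}Q_{d-\deg\bar{g}}(X)$ and $p_d=0$ on $\partial X$ automatically, since $\bar{g}=0$ there. The Leibniz rule gives a constant $C$ depending only on $n$ and $\|\bar{g}\|_{C^1(X)}$ with
\[
\|\rho-p_d\|_{C^1(X)}=\|\bar{g}(h-r)\|_{C^1(X)}\leq C\,\|h-r\|_{C^1(X)}\leq 3C\eta,
\]
and choosing $\eta<\epsilon/(3C)$ at the outset delivers $\|\rho-p_d\|_{C^1(X)}<\epsilon$.

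The main obstacle I expect is the $C^1$ polynomial approximation step: Corollary~\ref{cor:densNonneg} provides only $C^0$ density of quadratic modules, whereas here both $h$ and its gradient must be approximated simultaneously. This is precisely where the hypothesis that $\rho$ vanishes on a full $\zeta$-neighborhood of $\partial X$ (not merely on $\partial X$) is essential: it guarantees that the zero extension $\tilde h$ is genuinely $C^1$ on $\mathbb{R}^n$, so that a classical $C^1$-Weierstrass-type result on a cube is available. Without this extra room near the boundary, the gradient of $\tilde h$ would be discontinuous across $\partial X$ and the approximation argument would collapse.
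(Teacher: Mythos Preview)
Your proof is correct and follows essentially the same route as the paper: factor $\rho=\bar g h$ with $h\in C^1(X)$ vanishing near $\partial X$, approximate $h$ in $C^1$ by a strictly positive polynomial, invoke Putinar to place that polynomial in a truncated quadratic module, and multiply by $\bar g$. Your version is in fact more explicit than the paper's at the $C^1$-density step (the paper simply asserts ``polynomials are dense in $C^1$'' without discussing the extension of $h$ across $\partial X$), and your concluding remark correctly identifies why the $\zeta$-neighborhood hypothesis is needed.
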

\begin{proof}
Since $\bar{g}>0$ on $X^\circ$, we can factor $\rho=\bar{g}h$ with $h \in C^1(X)$ given by
\[
h(x) := \begin{cases} \rho(x) / \bar{g}(x) & \mathrm{if}\:\:\mathrm{dist}_{\partial X}(x) \ge \zeta \\
									0 & \mathrm{otherwise}.	 \end{cases}
\]
Since polynomials are dense in $C^1$ there exists for every $\delta > 0$ a polynomial $\hat{h} > 0$ such that 
\begin{equation}
 \| \hat{h} - h \|_{C^1(X)} < \delta.
\end{equation}
Applying Proposition~\ref{prop:putinar} to $\hat{h} $ we see that there exists $\hat{p}_{\hat{d}} \in Q_{\hat{d}}(X)$ for some $\hat{d} \ge 0$ such that
\begin{equation}
\| \hat{h} - \hat{p}_{\hat{d}}\|_{C^1(X)} < \delta.
\end{equation}
Defining $p_d := \hat{p}_{\hat{d}} \bar{g}$ we see that $p_d \in \bar{g}Q_{d-\mathrm{deg}\,\bar{g}}(X)$ with $d = \hat{d} + \mathrm{deg}(\bar{g})$ and that $p_d=0$ on $\partial X$. Finally,
\[
\| \rho - p_d \|_{C^0} = \|h\bar{g} - \hat{p}_{\hat{d}} \bar{g} \|_{C^0} \le \|\bar{g}\|_{C^0} \|h - \hat{p}_{\hat{d}}  \|_{C^0} < 2\delta \|\bar{g}\|_{C^0} 
\]
and
\begin{align*}
\| {\nabla}\rho - {\nabla} p_d\|_{C^0} &= \| \bar{g}\:{\nabla} h+ h\:{\nabla}\bar{g}  - \bar{g}\:{\nabla}\hat{p}_{\hat{d}}+ \hat{p}_{\hat{d}}\:{\nabla}\bar{g}\|_{C^0}  \\
 &\le  \|{\nabla}\bar{g} \|_{C^0} \|h - \hat{p}_{\hat{d}}\|_{C^0} + \|\bar{g}\|_{C^0} \|{\nabla} h - {\nabla}\hat{p}_{\hat{d}} \|_{C^0}  \\
 & \le 2\delta \big(   \|{\nabla}\bar{g} \|_{C^0} +     \| \bar{g}\|_{C^0} \big).
\end{align*}
Therefore choosing $\delta$ such that $2\delta \big(   \|{\nabla}\bar{g} \|_{C^0} +     2\| \bar{g}\|_{C^0} \big) < \epsilon$ gives the desired result.
\end{proof}

Now we are ready to prove our main result, Theorem~\ref{thm:main}.

\textbf{Proof} (of Theorem~\ref{thm:main}):
Consider the sequences $\{u^k \in C^1(X;U)  \}_{k=1}^\infty$, $\{\tau^k \in L(X;[0,\infty])  \}_{k=1}^\infty$ from~Assumption~\ref{as:smoothCont}. By the first part of Lemma~\ref{lem:conv} the sequence of associated densities $(\rho^k,\rho_0^k,\rho_T^k,{\sigma}^k)$ generated by $(u^k,\tau^k)$ is feasible in~(\ref{opt:dual_inf}) and satisfies~(\ref{eq:optEq}). By Assumption~\ref{as:smoothCont}, $\rho^k = 0$ and $ \sigma ^k = 0$ on $\{x\in X\: :\: \mathrm{dist}_{\partial X}(x)< \gamma^k  \} $ (since $ \sigma ^k = u^k\rho^k$) with $\gamma^k > 0$.

 Hence by Lemma~\ref{lem:polyApprox} there exist polynomial densities $\rho^{k,\mathrm{pol}}\in \bar{g}Q_{d_k-\mathrm{deg}\,\bar{g}}(X)$, $\sigma^{k,\mathrm{pol}}\in \bar{g}Q_{d_k-\mathrm{deg}\,\bar{g}}(X)^m$ for some degrees $d^k \ge 0$ such that
\begin{equation}\label{eq:auxFirst}
\| \rho^k- \rho^{k,\mathrm{pol}}\|_{C^1(X)}  < 1/k
\end{equation}
\begin{equation}
\| \sigma_i^k - \sigma_i^{k,\mathrm{pol}} \|_{C^1(X)} < 1/k
\end{equation}

 $\bar{u}\rho^{k,\mathrm{pol}} - \sigma_i^{k,\mathrm{pol}} \in \bar{g}Q_{d^k-\mathrm{deg}\,\bar{g}}(X)$ for all $i=1,\ldots,m$ (since $u^k(x)\in U = [0,\bar{u}]^m $ for all $x\in X$ and hence $\bar{u}\rho^k \ge \sigma_i^k$ on $X$). Notice also that since $\rho^{k,\mathrm{pol}}\in \bar{g}Q_{d^k-\mathrm{deg}\,\bar{g}}(X)  $, we have $-\rho^{k,\mathrm{pol}} \in \bar{g}\mathbb{R}_{d^k-\mathrm{deg}\,\bar{g}} $. Next, since $\rho_0^k \ge 1$ and $\rho_T^k \ge 0$, we can find, by Corollary~\ref{cor:densNonneg}, polynomial densities $\hat{\rho}_0^{k,\mathrm{pol}} \in 1 + Q_{d^k}(X)$ and $\hat{\rho}_T^{k,\mathrm{pol}} \in Q_{d^k}(X)$ such that
 \begin{equation}
\| \rho_0^k - \hat{\rho}_0^{k,\mathrm{pol}} \|_{C^0(X)} < 1/k,
\end{equation}
 \begin{equation}\label{eq:auxLast}
\| \rho_T^k- \hat{\rho}_T^{k,\mathrm{pol}}\|_{C^0(X)}< 1/k.
\end{equation}
Since $(\rho^k,\rho_0^k,\rho_T^k,{\sigma}^k)$ satisfy the equality constraint of~(\ref{opt:dual_inf}) we have
\[
\hat{\rho}_T^{k,\mathrm{pol}} + \beta\rho^{k,\mathrm{pol}} - \hat{\rho}_0^{k,\mathrm{pol}} + \mathrm{div}(\rho^{k,\mathrm{pol}}f) + \sum_{i = 1}^m\mathrm{div}(\sigma_i^{k,\mathrm{pol}}f_{u_i})  = \omega^k
\]
where
\[
\omega^k := \hat{\rho}_T^{k,\mathrm{pol}}-\rho_T^k + \beta(\rho^{k,\mathrm{pol}}-\rho^k) - (\hat{\rho}_0^{k,\mathrm{pol}}-\rho_0^k) + \mathrm{div}[(\rho^{k,\mathrm{pol}}-\rho^k )f] + \sum_{i = 1}^m\mathrm{div}[(\sigma_i^{k,\mathrm{pol}}-\sigma_i^k)f_{u_i}]
\]
is a polynomial such that $\| \omega^k\|_{C^0} \to 0$ as $k\to \infty$ in view of~(\ref{eq:auxFirst})-(\ref{eq:auxLast}). Defining the constants $\epsilon^k =  1/k + \|\omega^k\|_{C^0}$ and setting
\[
\rho_T^{k,\mathrm{pol}} := \hat{\rho}_T^{k,\mathrm{pol}} + \epsilon^k
\]
\[
\rho_0^{k,\mathrm{pol}} := \hat{\rho}_0^{k,\mathrm{pol}} + \epsilon^k + \omega^k
\]
we see that
\[
\rho_T^{k,\mathrm{pol}} + \beta\rho^{k,\mathrm{pol}} - \rho_0^{k,\mathrm{pol}} + \mathrm{div}(\rho^{k,\mathrm{pol}}f) + \sum_{i = 1}^m\mathrm{div}(\sigma_i^{k,\mathrm{pol}}f_{u_i})  = 0,
\]
and $\rho_0^{k,\mathrm{pol}} -1 $ and $\rho_T^{k,\mathrm{pol}}$ are strictly positive on $X$ and hence belong to $Q_{d_k}(X)$. The densities $(\rho^{k,\mathrm{pol}},\rho_0^{k,\mathrm{pol}},\rho_T^{k,\mathrm{pol}},\sigma^{k,\mathrm{pol}})$ are therefore feasible in~(\ref{opt:dual_sos}) for some $d^k \ge 0$. In addition, by construction, $\| \rho_0^{k,\mathrm{pol}} - \rho_0^k \|_{C^0} \to 0$ and $\| \rho_T^{k,\mathrm{pol}} - \rho_T^k \|_{C^0} \to 0$ as $k\to \infty$. Therefore we have obtained a sequence of polynomial densities $(\rho^{k,\mathrm{pol}},\rho_0^{k,\mathrm{pol}},\rho_T^{k,\mathrm{pol}},\sigma^{k,\mathrm{pol}})$ that are feasible in~(\ref{opt:dual_sos}) and such that
\[
\| \rho_0^{k,\mathrm{pol}} - \rho_0^k \|_{C^0} \to 0,\quad \| \rho_T^{k,\mathrm{pol}} - \rho_T^k \|_{C^0} \to 0,\quad \| \rho^{k,\mathrm{pol}} - \rho^k \|_{C^1} \to 0,\quad \| \sigma^{k,\mathrm{pol}} - \sigma^k \|_{C^1} \to 0
\]
as $k \to \infty $. This implies that
\[
|p(\rho^{k,\mathrm{pol}},\rho_0^{k,\mathrm{pol}},\rho_T^{k,\mathrm{pol}},\sigma^{k,\mathrm{pol}}) - p(\rho^k,\rho_0^k,\rho_T^k,{\sigma}^k)| \to 0
\]
and hence $(\rho^{k,\mathrm{pol}},\rho_0^{k,\mathrm{pol}},\rho_T^{k,\mathrm{pol}},\sigma^{k,\mathrm{pol}})$ satisfies~(\ref{eq:optEq}) and so $p(\rho^{k,\mathrm{pol}},\rho_0^{k,\mathrm{pol}},\rho_T^{k,\mathrm{pol}},\sigma^{k,\mathrm{pol}}) \to p^\star$ by Theorem~\ref{thm:tighten}. Therefore~(\ref{eq:valConv}) holds with the rational controllers $u^{k} : = \sigma^{k,\mathrm{pol}} / \rho^{k,\mathrm{pol}}$ by the second part of Lemma~\ref{lem:conv}. This finishes the proof. \hfill $\square$
%
%

\section{Value function approximations}\label{sec:valApprox}

In this section we propose a converging hierarchy of approximations from below and from above to the value function $V_u$ associated to a  rational controller $u = \sigma / \rho$ with $\sigma \in \mathbb{R}[x]^m$ and $\rho \in \mathbb{R}[x]$ satisfying $0\le\sigma_i\le\bar{u}\rho$ on $X$. In addition we describe a hierarchy of approximations from below to the optimal value function~$V$. This is useful as a post-processing step, once a rational control law has been computed as described in Section \ref{hierarchy}}, providing an explicit bound on the suboptimality of the controller.

Note that, trivially, approximations from above to $V_u$ provide approximations from above to~$V$. Defining $\hat{f} = \rho f + \sum_{i=1}^m f_{u_i} \sigma_i \in\mathbb{R}[x]^n$ and $\hat{l} = \rho l_x + \sum_{i=1}^m l_{u_i}\sigma_i \in\mathbb{R}[x]$, the degree $d$ polynomial upper and lower bounds are given by
\begin{equation}\label{opt:val_u_ub}
\begin{array}{rclll}
 & \min\limits_{ \overline{V_u} \in \mathbb{R}[x]_d} &  \int_X \overline{V_u}(x)\,dx  \\
& \hspace{0.0cm} \mathrm{s.t.}  & \beta \rho\overline{V_u}  - \nabla \overline{V_u}   \cdot \hat{f} - \hat{l} \in Q_d(X)  \vspace{1mm}\\
&& \overline{V_u}  - M \in Q_d(X) + \bar{g}\mathbb{R}_{d-\mathrm{deg}\,\bar{g}} ,
\end{array}
\end{equation}
and
\begin{equation}\label{opt:val_u_lb}
\begin{array}{rclll}
 & \max\limits_{\underline{V_u} \in \mathbb{R}[x]_d} & \int_X \underline{V_u} (x)\,dx  \\
& \hspace{0.0cm} \mathrm{s.t.}  & -(\beta \rho\underline{V_u} - \nabla \underline{V_u}   \cdot \hat{f} - \hat{l}) \in Q_d(X) \vspace{1mm} \\
&& M - \underline{V_u}  \in Q_d(X) + \bar{g}\mathbb{R}_{d-\mathrm{deg}\,\bar{g}} ,
\end{array}
\end{equation}
respectively.
Fixing a basis of $\mathbb{R}[x]_d$, the objective functions of (\ref{opt:val_u_ub}) and (\ref{opt:val_u_lb}) become linear in the coefficients of $\overline{V_u}$ respectively $\underline{V_u}$ in this basis. Problems~(\ref{opt:val_u_ub}) and (\ref{opt:val_u_lb}) are therefore convex SOS problems and immediately translate to SDPs (see Section~\ref{sec:sosProg}).

\begin{theorem}\label{thm:Vu_ub_lb}
Let $\overline{V_u}^d$ and $\underline{V_u}^d$ denote solutions to~(\ref{opt:val_u_ub}) and (\ref{opt:val_u_lb}) of degree $d$. Then $\overline{V_u}^d \ge V_u \ge \underline{V_u}^d$ on $X$ and
\begin{equation}\label{eq:convUbLb}
\lim_{d\to \infty }\int_X\overline{V_u}^d(x)\,dx = \int_X V_u(x) \, dx = \lim_{d\to \infty }\int_X\underline{V_u}^d(x)\,dx .
\end{equation}
\end{theorem}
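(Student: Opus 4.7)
The theorem decomposes naturally into the pointwise bounds $\overline{V_u}^d\ge V_u\ge \underline{V_u}^d$ on $X$ and the $L^1$ convergence; I would prove them in that order. For the bounds, the approach is a classical comparison argument: since $\rho>0$ on $X$ (so that $u=\sigma/\rho$ is well-defined), dividing the first SOS constraint of~(\ref{opt:val_u_ub}) by $\rho$ shows that $\overline{V_u}^d$ is a classical super-solution of the HJB equation $\beta V-\nabla V\cdot f_{cl}=l_{cl}$ associated with the closed-loop system, where $f_{cl}:=f+\sum_i u_i f_{u_i}$ and $l_{cl}:=l_x+\sum_i u_i l_{u_i}$; the second constraint forces $\overline{V_u}^d\ge M$ on $\partial X$ since $\bar g=0$ there. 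Computing $\tfrac{d}{dt}\bigl[e^{-\beta t}\overline{V_u}^d(x(t\mid x_0))\bigr]$ along the closed-loop trajectory and integrating from $0$ to the first exit time $\tau_u^\star(x_0)$ (with boundedness of $\overline{V_u}^d$ on compact $X$ handling the case $\tau_u^\star=\infty$) then yields $\overline{V_u}^d(x_0)\ge V_u(x_0)$. The inequality $V_u\ge\underline{V_u}^d$ follows by the symmetric sub-solution argument.

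For the $L^1$ convergence, by the pointwise bounds just proven it suffices to construct, for every $\varepsilon>0$, a polynomial feasible in~(\ref{opt:val_u_ub}) (resp.\ in~(\ref{opt:val_u_lb})) whose integral over $X$ differs from $\int_X V_u\,dx$ by $O(\varepsilon)$. Assuming $V_u\in C^1(X)$, so that the HJB equation holds as an equality on $X$ and $V_u=M$ on $\partial X$, the natural candidate for the upper bound is $W_\varepsilon:=V_u+\varepsilon$: direct substitution yields the residual $\beta\rho W_\varepsilon-\nabla W_\varepsilon\cdot\hat f-\hat l=\beta\rho\varepsilon>0$ on $X$, and $W_\varepsilon=M+\varepsilon$ on $\partial X$. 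One then approximates $W_\varepsilon$ in the $C^1$ norm by a polynomial $p$ (Stone-Weierstrass); the strict positivity of the HJB residual is preserved, so Proposition~\ref{prop:putinar} provides the required $Q_d(X)$ certificate for $d$ large enough. For the lower bound, the analogous candidate $W_\varepsilon:=V_u-\varepsilon$ makes $M-W_\varepsilon\ge\varepsilon$ uniformly on $X$, so both SOS conditions of~(\ref{opt:val_u_lb}) follow directly from Putinar applied to strictly positive polynomials.

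The main obstacle is the boundary representation $p-M\in Q_d(X)+\bar g\,\mathbb{R}_{d-\mathrm{deg}\,\bar g}$ in the upper-bound problem: since $V_u+\varepsilon-M$ equals $\varepsilon$ on $\partial X$ but is typically strictly less than $\varepsilon$ (and often negative) in $X^\circ$, no direct $Q_d$-certificate is available. My plan here is to exploit the vanishing of $M-V_u\ge 0$ on $\partial X$ together with~(\ref{eq:barg}) to factor $M-V_u=\bar g\,h$ with $h\ge 0$ continuous, approximate $h$ by a nonnegative polynomial $\hat h$, and take $-\hat h$ as the coefficient of $\bar g$ in the decomposition; the residual $p-M+\bar g\hat h\approx\varepsilon$ is then strictly positive on $X$ and belongs to $Q_d(X)$ by Putinar. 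A secondary technical hurdle is the global $C^1$ regularity of $V_u$, which may fail for a generic rational feedback (for instance at loci where trajectories graze $\partial X$); in full generality I would first approximate $V_u$ in $L^1$ by $C^1$ strict super-solutions, via mollification or by perturbing $u$ slightly to ensure transversal exit at $\partial X$, before applying the polynomial approximation step above.
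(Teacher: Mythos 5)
Your first half is exactly the paper's argument: the comparison/Gronwall computation along closed-loop trajectories, split on whether the first exit time is finite (boundedness of the polynomial handles $\tau=\infty$, the boundary condition $\overline{V_u}^d\ge M$ on $\partial X$ handles $\tau<\infty$), gives $\overline{V_u}^d\ge V_u\ge\underline{V_u}^d$. For the convergence (\ref{eq:convUbLb}), however, the paper does not attempt a primal construction of near-feasible polynomials at all: it views (\ref{opt:val_u_ub}) and (\ref{opt:val_u_lb}) as duals of moment relaxations of an infinite-dimensional LP over measures satisfying the discounted Liouville equation with $\mu_0=\lambda_X$ and $\nu_i\le\bar u\mu$, uses the superposition Theorem~\ref{thm:meas_rep_crucial_thm} to identify the value of that LP with $\int_X V_u\,dx$, boundedness of the measures for infinite-dimensional duality, and the ball constraint together with \cite{trnovska} for absence of a finite-dimensional duality gap.

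Your primal route has genuine gaps where you have filed the difficulties as technicalities. First, $V_u$ is in general not $C^1$ and not even continuous: the closed-loop first exit time $\tau_u^\star$ jumps across the boundary of the set of initial conditions whose trajectories leave $X$, and $V_u$ inherits this jump. This is the central obstruction, not a secondary one; mollifying a function that solves the equation only away from a jump does not produce a classical strict super- or subsolution near the jump, and ``perturbing $u$ slightly to ensure transversal exit'' changes $V_u$ by an amount you would still have to control in $L^1$ (exit times are precisely the unstable quantities here), with no argument given. Second, the premise $V_u=M$ on $\partial X$ is false in general: wherever the closed-loop field points into $X$ at the boundary, $\tau_u^\star>0$ and hence $V_u<M$ there (because $M>\beta^{-1}\sup l$). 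Consequently $M-V_u$ does not vanish on all of $\partial X$, the factorization $M-V_u=\bar g\,h$ with $h\ge0$ continuous is unavailable, and your candidate $V_u+\varepsilon$ cannot satisfy the constraint $\overline{V_u}-M\in Q_d(X)+\bar g\,\mathbb{R}[x]_{d-\mathrm{deg}\,\bar g}$, since every element of that set is nonnegative on $\partial X$ ($\bar g$ vanishes there) while $V_u+\varepsilon-M$ is negative on the inflow part of the boundary. Any feasible competitor must rise to $M$ near that part of $\partial X$ while remaining a strict supersolution, i.e.\ a boundary-layer correction whose $L^1$ cost must be shown to be small; your sketch does not supply it, and this, together with the regularity issue, is exactly what the paper sidesteps by proving convergence on the dual (measure) side via the superposition theorem rather than by exhibiting explicit feasible points.
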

\begin{proof}
See Appendix~A.
\end{proof}

As a simple corollary we obtain a converging sequence of polynomial over-approximations to $V$, the optimal value function of~(\ref{opt:main}):
\begin{theorem}
Let $\overline{V}_{u^{d_1}}^{d_2}$ denote the degree $d_2$ polynomial approximation from above to the value function associated to the rational controller $u^{d_1}$ obtained from~(\ref{opt:dual_sos}) using~(\ref{eq:ud_def}). Then $\overline{V}_{u^{d_1}}^{d_2} \ge V$ on $X$ and
\[
\lim_{d_1 \to \infty}\lim_{d_2 \to \infty} \int_X (\overline{V}_{u^{d_1}}^{d_2}(x) - V(x))\, dx = 0.
\]
\end{theorem}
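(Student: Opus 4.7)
The plan is to obtain this result as an essentially immediate corollary of Theorem~\ref{thm:main} combined with Theorem~\ref{thm:Vu_ub_lb}. Before invoking Theorem~\ref{thm:Vu_ub_lb}, I first want to check that the rational controller $u^{d_1} = \sigma^{d_1}/\rho^{d_1}$ produced by~(\ref{opt:dual_sos}) and~(\ref{eq:ud_def}) actually fits into the framework of Section~\ref{sec:valApprox}. The feasibility of $(\rho^{d_1},\rho_0^{d_1},\rho_T^{d_1},\sigma^{d_1})$ in~(\ref{opt:dual_sos}) gives $\sigma_i^{d_1}\in Q_d(X)+\bar g Q_{d-\mathrm{deg}\,\bar g}(X)$ and $\bar u\rho^{d_1}-\sigma_i^{d_1}\in Q_d(X)+\bar g Q_{d-\mathrm{deg}\,\bar g}(X)$, so $0\le \sigma_i^{d_1}\le \bar u\rho^{d_1}$ on $X$. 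Hence the hypotheses of Theorem~\ref{thm:Vu_ub_lb} are met and~(\ref{opt:val_u_ub}) indeed produces a well-defined polynomial $\overline{V}_{u^{d_1}}^{d_2}$.

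For the pointwise inequality, I would argue in two steps. Theorem~\ref{thm:Vu_ub_lb} gives $\overline{V}_{u^{d_1}}^{d_2}\ge V_{u^{d_1}}$ on $X$ for every $d_2\ge 0$. On the other hand, $V_{u^{d_1}}(x)\ge V(x)$ for all $x\in X$ by the definition of $V$ (any feasible feedback pair attains a value no smaller than the optimal one, as noted right after Assumption~\ref{as:smoothCont}). Chaining the two inequalities yields $\overline{V}_{u^{d_1}}^{d_2}\ge V$ on $X$ as required.

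For the iterated limit, fix $d_1$ and let $d_2\to\infty$. Since $\overline{V}_{u^{d_1}}^{d_2}\ge V_{u^{d_1}}\ge 0$ on $X$ and $V_{u^{d_1}}\le M$ on $X$, and since~(\ref{eq:convUbLb}) of Theorem~\ref{thm:Vu_ub_lb} gives $\int_X \overline{V}_{u^{d_1}}^{d_2}\,dx\to\int_X V_{u^{d_1}}\,dx$, we obtain
\[
\lim_{d_2\to\infty}\int_X\bigl(\overline{V}_{u^{d_1}}^{d_2}(x)-V(x)\bigr)\,dx=\int_X\bigl(V_{u^{d_1}}(x)-V(x)\bigr)\,dx.
\]
Now letting $d_1\to\infty$ and invoking Theorem~\ref{thm:main} (under Assumption~\ref{as:smoothCont}) gives that this last integral tends to zero, which delivers the claim.

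There is no real obstacle here; the entire argument is a two-line chaining of the previous two theorems, with the only minor verification being that the SOS-extracted rational controller satisfies the structural hypotheses assumed at the top of Section~\ref{sec:valApprox}. If anything, the point worth emphasizing in the write-up is merely that the two limits are taken in the stated order ($d_2$ first, then $d_1$), so that Theorem~\ref{thm:Vu_ub_lb} can be applied with $u=u^{d_1}$ held fixed before Theorem~\ref{thm:main} is used.
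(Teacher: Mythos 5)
Your proposal is correct and is exactly the argument the paper intends: the result is stated there as a simple corollary, with the pointwise bound coming from chaining $\overline{V}_{u^{d_1}}^{d_2}\ge V_{u^{d_1}}\ge V$ and the iterated limit coming from Theorem~\ref{thm:Vu_ub_lb} (with $d_1$ fixed) followed by Theorem~\ref{thm:main}. Your preliminary check that the extracted controller satisfies $0\le\sigma_i^{d_1}\le\bar u\rho^{d_1}$ on $X$, so that it fits the hypotheses of Section~\ref{sec:valApprox}, is a sensible addition rather than a deviation.
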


Now we describe a hierarchy of lower bounds on $V$:
\begin{equation}\label{opt:val_opt_lb}
\begin{array}{rclll}
 & \max\limits_{\underline{V} \in \mathbb{R}[x]_d,\;  p \in \mathbb{R}[x]_d^m} &  \int_X \underline{V} (x)\,dx  \\
& \hspace{0.0cm} \mathrm{s.t.}  & l_x - \beta \underline{V} + \nabla \underline{V} \cdot f + \bar{u} \sum_{i=1}^m p_i \in Q_d(X) \vspace{1mm} \\
&&l_{u_i} + \nabla \underline{V} \cdot f_{u_i} - p_i \in Q_d(X)\\
&& - p_i \in Q_d(X)\\
&& M - \underline{V}  \in Q_d(X) + \bar{g}\mathbb{R}_{d-\mathrm{deg}\,\bar{g}}.
\end{array}
\end{equation}
\begin{theorem}
If $\underline{V} \in \mathbb{R}[x]_d$ is feasible in~(\ref{opt:val_opt_lb}), then $\underline{V} \le V$ on $X$.
\end{theorem}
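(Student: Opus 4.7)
The plan is to show that the three SOS constraints of~(\ref{opt:val_opt_lb}) encode exactly a pointwise polynomial subsolution of the Hamilton-Jacobi-Bellman inequality for the OCP~(\ref{opt:main}). Concretely, constraints two and three give $p_i(x) \le 0$ and $p_i(x) \le l_{u_i}(x) + \nabla \underline{V}(x)\cdot f_{u_i}(x)$ on $X$; since the Hamiltonian of~(\ref{opt:main}) is separably affine in each $u_i \in [0,\bar{u}]$, its minimum in $u_i$ equals $\bar{u}\min\{0,\, l_{u_i}+\nabla \underline{V}\cdot f_{u_i}\}$. The polynomial $\bar{u}\sum_i p_i$ is therefore a lower bound on this minimum, and constraint one combined with it yields
\[
\beta \underline{V}(x) \;\le\; l_x(x) + \nabla \underline{V}(x)\cdot f(x) + \sum_{i=1}^m u_i\bigl[l_{u_i}(x) + \nabla \underline{V}(x)\cdot f_{u_i}(x)\bigr] \quad \text{for all } u \in U,\ x\in X.
\]
The fourth constraint gives $\underline{V}\le M$ on $X$.

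Next I would pick an arbitrary initial condition $x_0 \in X$ and any admissible pair $(u(\cdot),\tau)$ feasible in~(\ref{opt:main}). Applying the fundamental theorem of calculus to $t\mapsto e^{-\beta t}\underline{V}(x(t))$ (which is absolutely continuous since $x(\cdot)$ is, and $\underline{V}$ is polynomial), one obtains for any finite $T\le\tau$
\[
e^{-\beta T}\underline{V}(x(T)) - \underline{V}(x_0) = \int_0^T e^{-\beta t}\bigl[-\beta\underline{V}(x(t)) + \nabla\underline{V}(x(t))\cdot\dot{x}(t)\bigr]\,dt.
\]
Substituting the dynamics and rearranging gives
\[
\underline{V}(x_0) = e^{-\beta T}\underline{V}(x(T)) + \int_0^T e^{-\beta t}\bigl[\beta\underline{V} - \nabla\underline{V}\cdot f - \sum_i u_i\,\nabla\underline{V}\cdot f_{u_i}\bigr]\,dt.
\]
Adding and subtracting the stage cost, and using the HJB subsolution inequality above together with $u_i \in [0,\bar{u}]$ and $p_i\le 0$ (so $u_i p_i \ge \bar{u} p_i$), the integrand in the resulting expression for $V_{u,\tau}^{T}(x_0) - \underline{V}(x_0)$ is nonnegative pointwise on $[0,T]$, where $V_{u,\tau}^T$ denotes the finite-horizon cost up to $T$. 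Similarly, constraint four ensures $e^{-\beta T}[M - \underline{V}(x(T))] \ge 0$ because $x(T)\in X$ by feasibility of $(u,\tau)$.

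Finally I would pass to the limit $T\to\tau$, allowing $\tau=\infty$. Since $\underline{V}$ is polynomial and $X$ compact, $\underline{V}$ is uniformly bounded on $X$, so $e^{-\beta T}\underline{V}(x(T))\to e^{-\beta\tau}\underline{V}(x(\tau))$ when $\tau<\infty$ and to $0$ when $\tau=\infty$ (in which case also $e^{-\beta\tau}M=0$). The nonnegative integrand permits monotone passage to the limit. This yields $\underline{V}(x_0) \le V_{u,\tau}(x_0)$ for every admissible $(u,\tau)$, and taking the infimum gives $\underline{V}(x_0)\le V(x_0)$. The only mildly delicate step is the $\tau=\infty$ limit, but boundedness of $\underline{V}$ on the compact set $X$ together with the sign of the integrand makes it routine; the algebraic identification of the constraints with the HJB subsolution inequality is the conceptual core.
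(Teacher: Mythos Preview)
Your approach is essentially the paper's: it defers to the Gronwall-type argument of Theorem~\ref{thm:Vu_ub_lb}, and your fundamental-theorem-of-calculus computation along trajectories is exactly that Gronwall step. The decoding of the first three constraints of~(\ref{opt:val_opt_lb}) into the pointwise HJB subsolution inequality $\beta\underline{V}\le l(x,u)+\nabla\underline{V}\cdot(f+\sum_i f_{u_i}u_i)$ on $X\times U$ is correct.

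There is one real slip. The fourth constraint $M-\underline{V}\in Q_d(X)+\bar{g}\,\mathbb{R}[x]_{d-\mathrm{deg}\,\bar{g}}$ does \emph{not} give $\underline{V}\le M$ on all of $X$: the multiplier of $\bar{g}$ is an arbitrary polynomial, so the term $\bar{g}\,r$ can be negative on $X^\circ$. It only forces $\underline{V}\le M$ on $\partial X$, where $\bar{g}=0$ (cf.\ the analogous reading in the proof of Theorem~\ref{thm:Vu_ub_lb}). Hence your sentence ``constraint four ensures $e^{-\beta T}[M-\underline{V}(x(T))]\ge 0$ because $x(T)\in X$'' is not justified when $\tau<\infty$ and $x(\tau)\in X^\circ$, which can certainly happen for an \emph{arbitrary} admissible stopping function. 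The fix is the one the paper uses in Appendix~A: take $\tau$ to be the first exit time $\inf\{t\ge 0: x(t\mid x_0)\notin X\}$ (or $+\infty$), which is without loss of generality by the choice of $M$ in~(\ref{eq:M}); then either $\tau=\infty$ and the boundary term vanishes, or $x(\tau)\in\partial X$ and $\underline{V}(x(\tau))\le M$ applies. With that restriction your argument is complete.
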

\begin{proof}
Follows by similar arguments based on Gronwall's Lemma as in the proof of Theorem~\ref{thm:Vu_ub_lb}.
\end{proof}

\begin{remark}
The question whether $\underline{V}$ converges from below to $V$ as degree $d$ in~(\ref{opt:val_opt_lb}) tends to infinity is open (although likely to hold). A proof would require an extension of the superposition Theorem~\ref{thm:meas_rep_crucial_thm} to non-Lipschitz vector fields (in the spirit of the finite-time superposition result of~\cite[Theorem 4.4]{ambrosio_crippa}) or an extension of the argument of~\cite{quincampoix} to the case of $\mu_T \neq 0 $, either of which is beyond the scope of this paper.
\end{remark}

\begin{remark}
Besides closed-loop cost function with respect to the OCP~(\ref{opt:main}), one can assess other aspects of the closed-loop behavior of the dynamical system~(\ref{eq:sys}) controlled by the rational controller $u = { \sigma} / \rho$. In particular, regions of attraction or maximum controlled invariant sets can be estimated by methods of~\cite{roa,inner_controlled_extended,mci_outer}, which extend readily to the case of rational systems.
\end{remark}

\section{Numerical examples}
This section demonstrates the approach on numerical examples. To improve the numerical conditioning of the SDPs solved, we use the Chebyshev basis to parametrize all polynomials. More specifically, we use tensor products of univariate Chebyshev polynomials of the first kind to obtain a multivariate Chebyshev basis. We note, however, that similar results, albeit slightly less accurate could be obtained with the standard multivariate monomial basis (in which case the SDPs can be readily formulated using high level modelling tools such as Yalmip~\cite{yalmip} or SOSOPT~\cite{sosopt}). The resulting SDPs were solved using MOSEK.

\subsection{Nonlinear double integrator}
As our first example we consider the nonlinear double integrator
\begin{align*}
\dot{x}_1 &= x_2 + 0.1x_1^3\\
\dot{x}_2 &= 0.3u
\end{align*}
subject to the constraints $u\in [-1,1]$ and $x\in X := \{x : \|x\|_2 < 1\}$ and stage costs $l_x(x) = x^\top x$ and $l_u(x) = 0$. The discount factor $\beta$ was set to 1; the constant $M$ to $1.01 > \sup_{x \in X}\{ x^\top x\} /\beta = 1$. First we obtain a rational controller of degree six by solving~(\ref{opt:dual_inf}) with $d = 6$. The graph of the controller is shown in Figure~\ref{fig:NL_doubleInteg_Cont}. Next we obtain a polynomial upper bound $\overline{V_u}$ of degree 14 on the value function associated to $u$ by solving (\ref{opt:val_u_ub}) with $d = 14$. To assess suboptimality of the controller $u$ we compare it with a lower bound $\underline{V}$ on the optimal value function of the problem~(\ref{opt:main}) obtained by solving~(\ref{opt:val_opt_lb}) with $d = 14$. The graphs of the two value functions are plotted in Figure~\ref{fig:NL_doubleInteg_valFun}. We see that the gap between the upper bound on $V_u$ and lower bound on $V$ is relatively small, verifying a good performance of the extracted controller. Quantitatively, the average performance gap defined as $100\int_X (\overline{V_u} - \underline{V}) dx / \int_X \underline{V} dx$ is equal to $19.5 \%$.

\begin{figure*}[h!]
	\begin{picture}(140,210)
	
	\put(120,0){\includegraphics[width=80mm]{./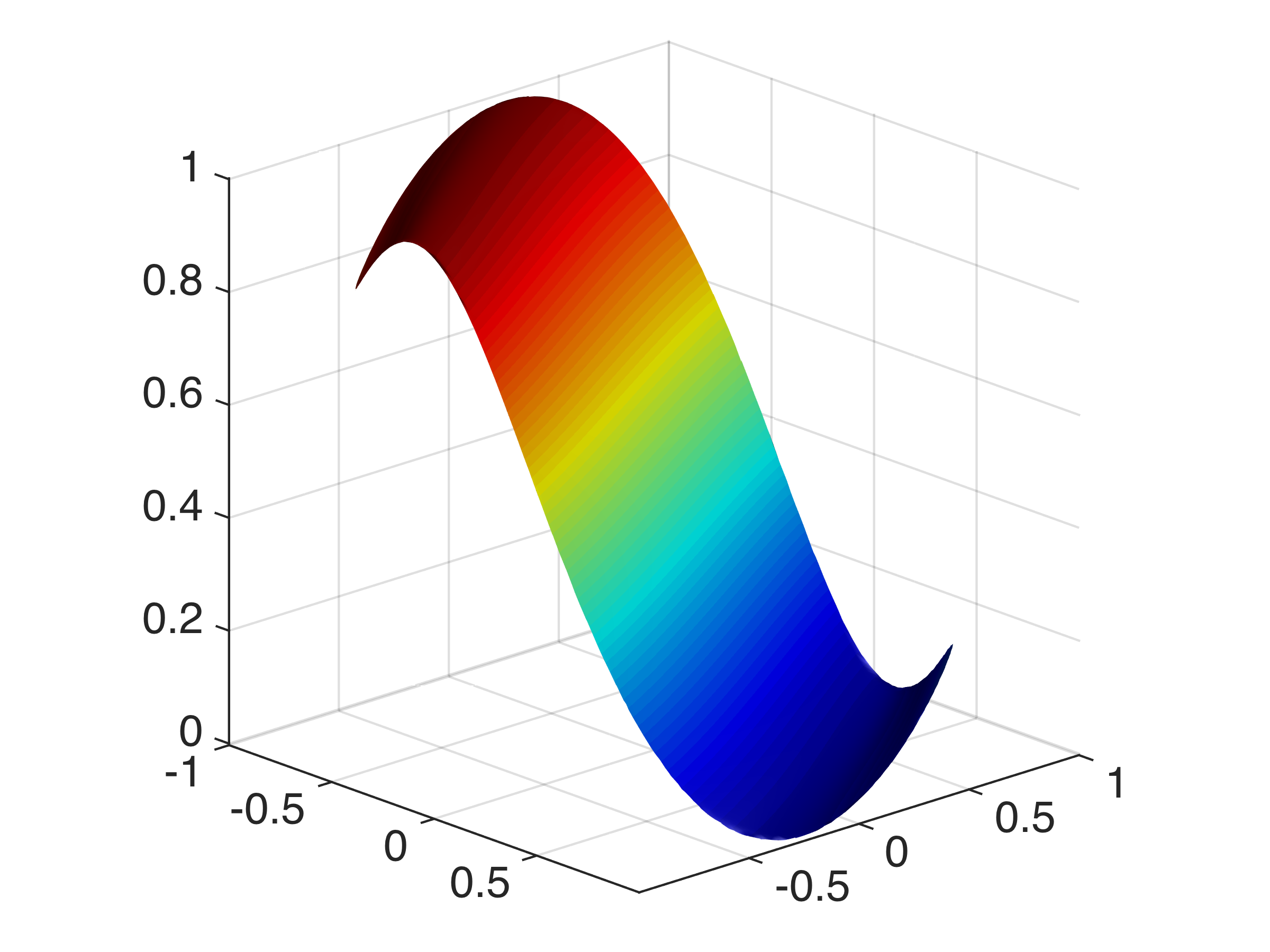}}
	
	\put(180,09){$x_1$}
	\put(290,10){$x_2$}
	\put(110,85){$u(x)$}
	\put(240,-20){\mycbox{white}}

	\end{picture}
	\caption{\small Nonlinear double integrator -- rational controller of degree six.}
	\label{fig:NL_doubleInteg_Cont}
\end{figure*}

\begin{figure*}[h!]
	\begin{picture}(140,210)
	
	\put(120,00){\includegraphics[width=90mm]{./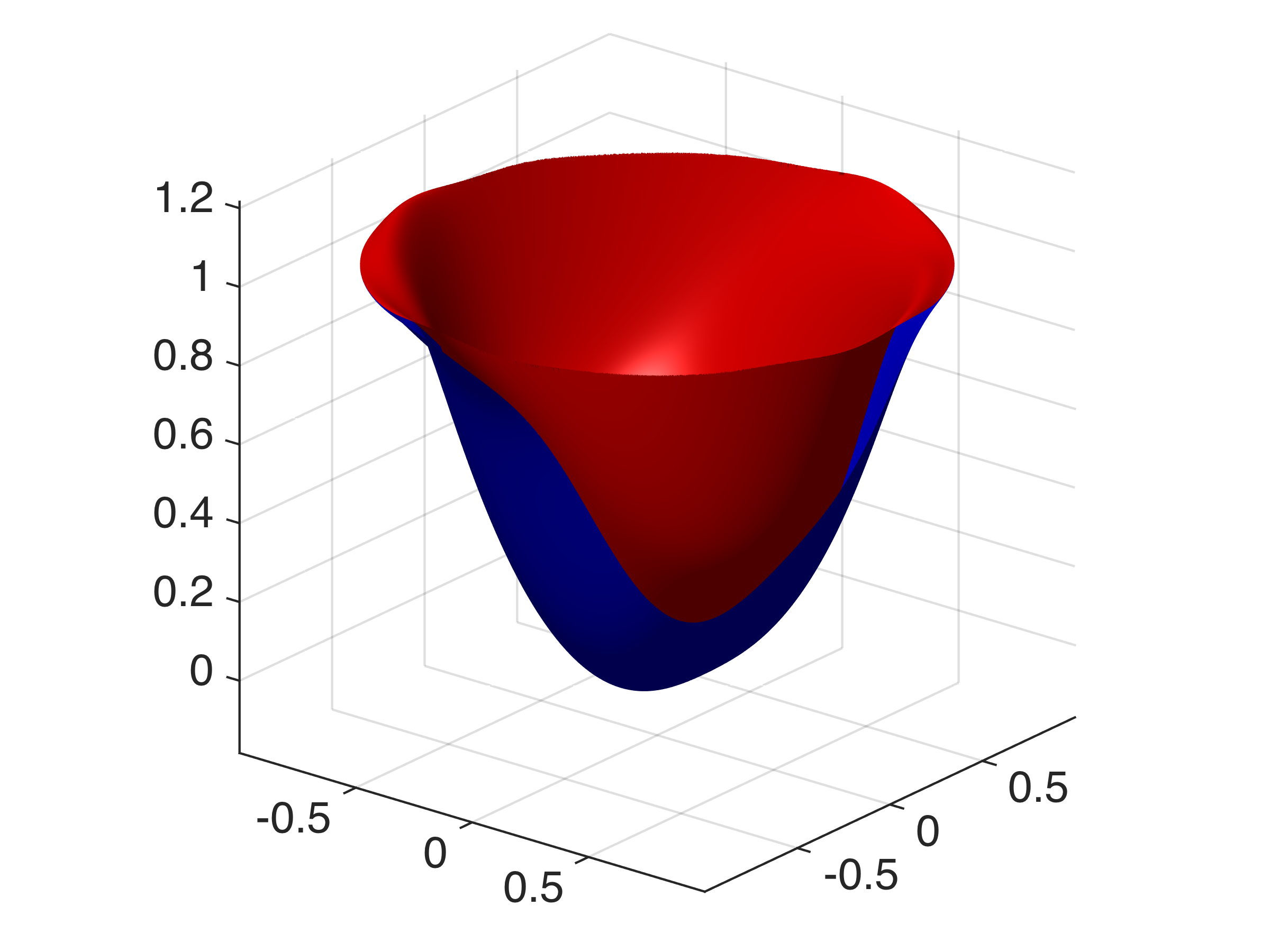}}
	\put(195,10){$x_1$}
	\put(312,14){$x_2$}
	
	\end{picture}
	\caption{\small Nonlinear double integrator -- upper bound on the value function $\overline{V_u}$ associated to the extracted controller (red); lower bound on the optimal value function $\underline{V}$ (blue).}
	\label{fig:NL_doubleInteg_valFun}
\end{figure*}


\subsection{Controlled Lotka-Volterra}
In our second example we apply the proposed method to a population model governed by $n$-dimensional controlled Lotka-Volterra equations
\[
\dot{x} = r \circ x \circ(\mathds{1} - Ax) + u^+ - u^{-},
\]
where $\mathds{1} \in \mathbb{R}^n$ is the vector of ones and $\circ$ denotes the componentwise (Hadamard) product. Each component $x_i$ of the state $x \in \mathbb{R}^n$ represents the size of the population of species $i$. The vector $r \in \mathbb{R}^n$ contains the intrinsic growth rates of each species and the matrix $A \in \mathbb{R}^{n\times n}$ captures the interaction between the species. If $A_{i,j} > 0$, then species $j$ is harmful to species $i$ (e.g., competes for resources) and if $A_{i,j} < 0$, then species $j$ is helpful to species $i$ (e.g., species $i$ feeds on species $j$); the diagonal components $A_{i,i}$ are normalized to one. The control inputs $u^+ \in [0,1]^n$ and $u^- \in [0,1]^n$ represent, respectively, the inflow and outflow of new species from the outside. For our numerical example we select $n = 4$ and model parameters
\[
r = \begin{bmatrix}    1 \\
    0.6 \\
    0.4\\
    0.2\end{bmatrix}, \quad A = \begin{bmatrix}    
    1 &   0.3 &   0.4 &   0.2 \\
   -0.2  &   1 &  0.4 &  -0.1 \\
   -0.1 &  -0.2  & 1   & 0.3 \\
   -0.1  & -0.2 & -0.3 &   1    
    \end{bmatrix},
\]
which results in a system with four states and eight control inputs. The economic objective is to harvest species number one while ensuring that no species goes extinct. More specifically the cost function is $l_u(x) = (-1.0, 0.5, 0.6, 0.8,    1.1, 2 ,4, 6)$ and $l_x(x) = 1$, where the vector $l_u(x)$ is associated with the control input vector $u = (u^-,u^+)$. Therefore there is a reward for harvesting species number one and cost associated with both introduction and hunting of all other species, the cost of hunting being lower than the cost of introduction. The reason for choosing $l_x(x) = 1$ is in order to make the joint stage cost $l(x,u)$~(\ref{eq:joinStageCost}) nonnegative; this choice does not affect optimality since $l_x(x(t)) = 1$ irrespective of the control input applied. The non-extinction constraint is expressed as $g(x) = 1- (Q^{-1}x - q)^\top (Q^{-1}x - q) \ge 0$ with $Q = \mathrm{diag}(0.475\cdot\mathds{1})$ and $q = 0.525\cdot\mathds{1}$. We choose $\beta = 1$ and $M = 16.16 > \sup_{x\in X, u\in U}\{l(u,x)\}/\beta  = 16$. We apply the coordinate transformation $x = Q\hat{x} + q$ and solve solve obtain a rational controller of degree eight by solving~(\ref{opt:dual_sos}). Figure~\ref{fig:lotkaVolterra} we shows plots for two different initial conditions, one with low population size of the first species and one with high. Finally, we evaluate the suboptimality of the extracted controller using the polynomial lower bound on the optimal value function of degree 11 obtained from~(\ref{opt:val_opt_lb}). Using Monte Carlo simulation with 1000 samples of initial conditions drawn from a uniform distribution over the constraint set we obtain average cost of the extracted controller to be 0.89 whereas the lower bounds predicts average cost of 0.72; hence the extracted controller is no more than $23.6\,\%$ suboptimal (modulo the statistical estimation  error). Note that we could also obtain a deterministic suboptimality estimate using the upper bound on the value function of the extracted controller obtained from~(\ref{opt:val_u_ub}). In this case, however, the upper bound~(\ref{opt:val_u_ub}) is not informative. Nevertheless, the Monte Carlo simulation along with the lower bound~(\ref{opt:val_opt_lb}) is a viable alternative in this case, since the extracted controller is simple and hence trajectories of the controlled system can be simulated rapidly.

\begin{figure*}[th]
\begin{picture}(140,180)
\put(0,90){\includegraphics[width=40mm]{./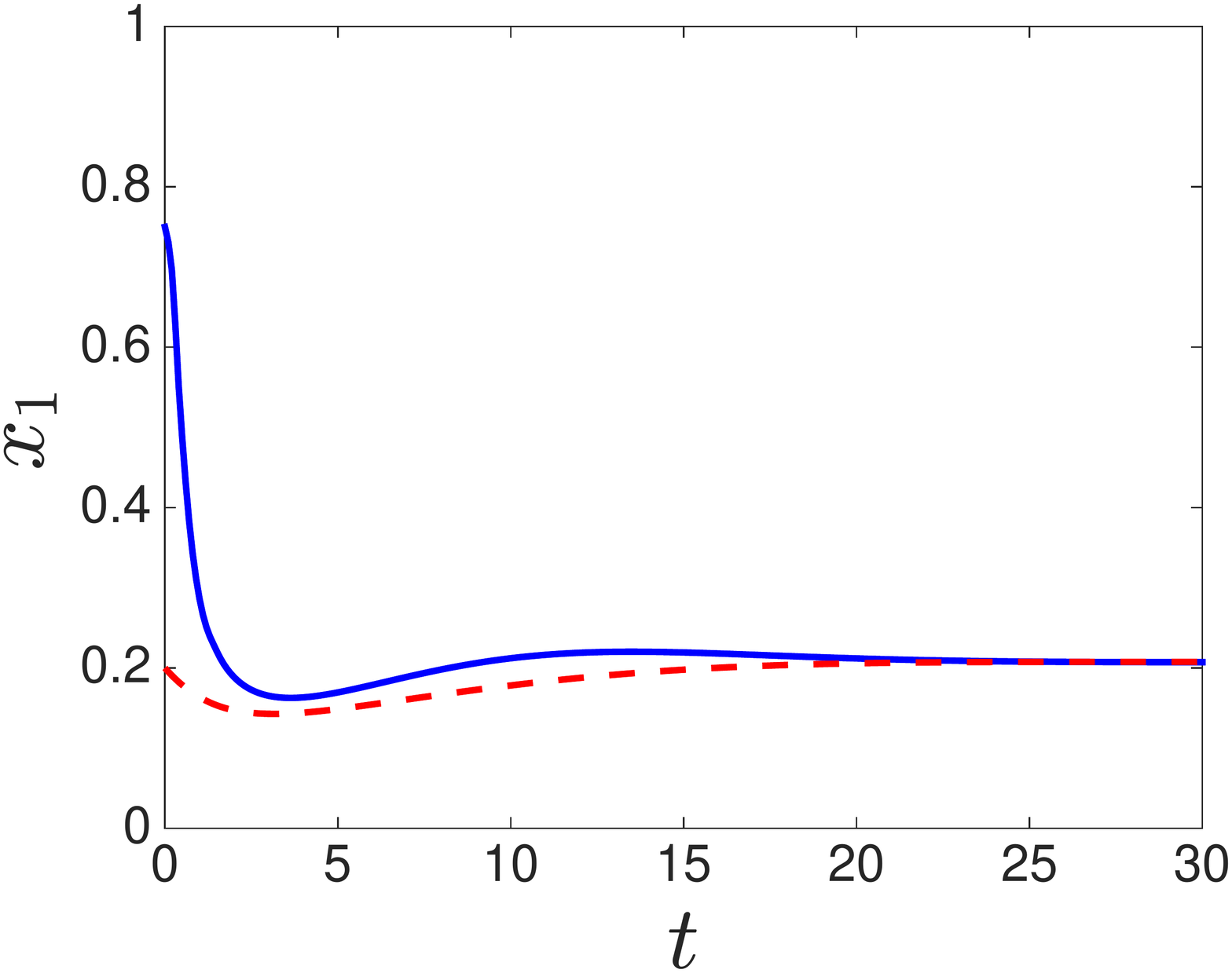}} 
\put(120,90){\includegraphics[width=40mm]{./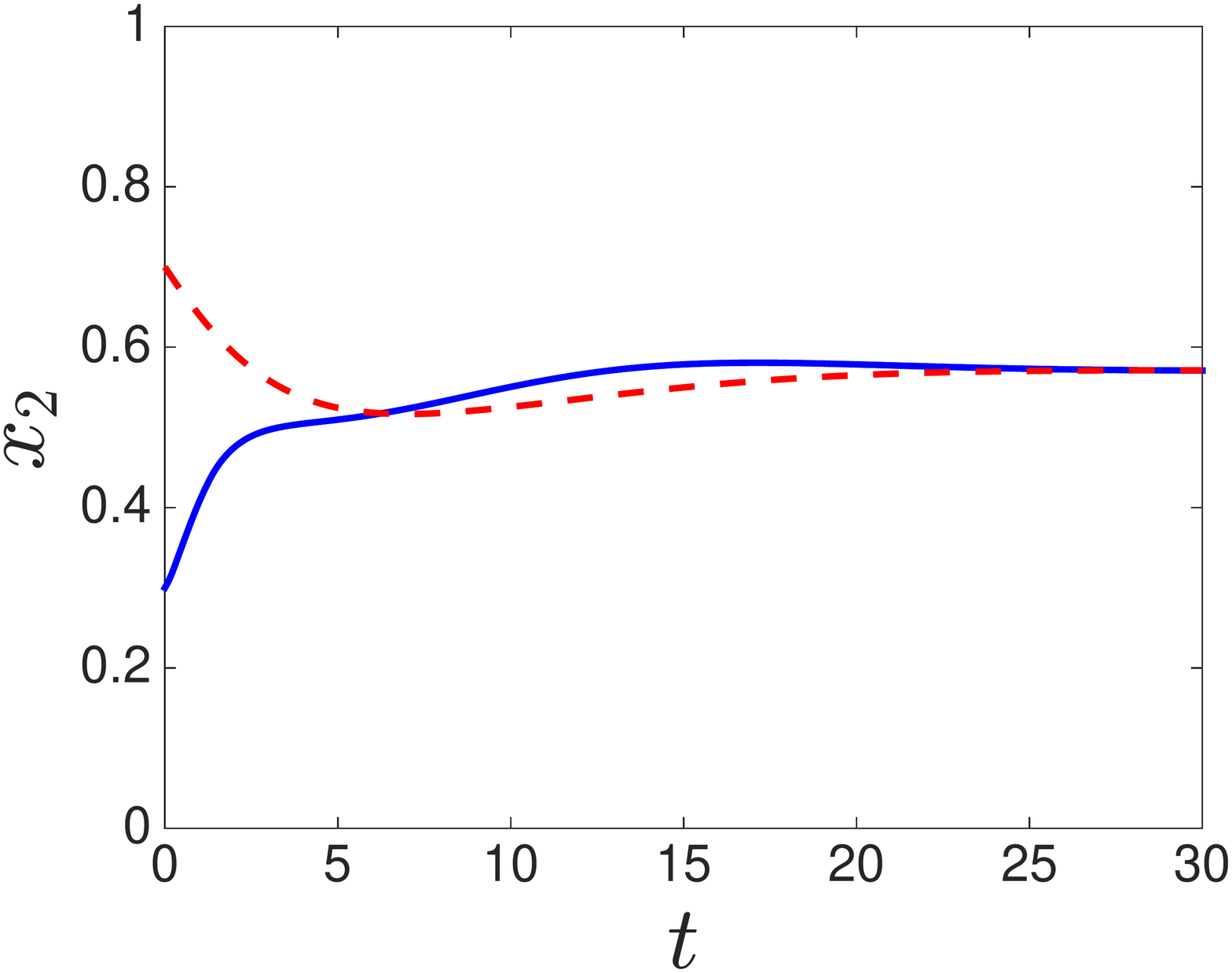}}
\put(240,90){\includegraphics[width=40mm]{./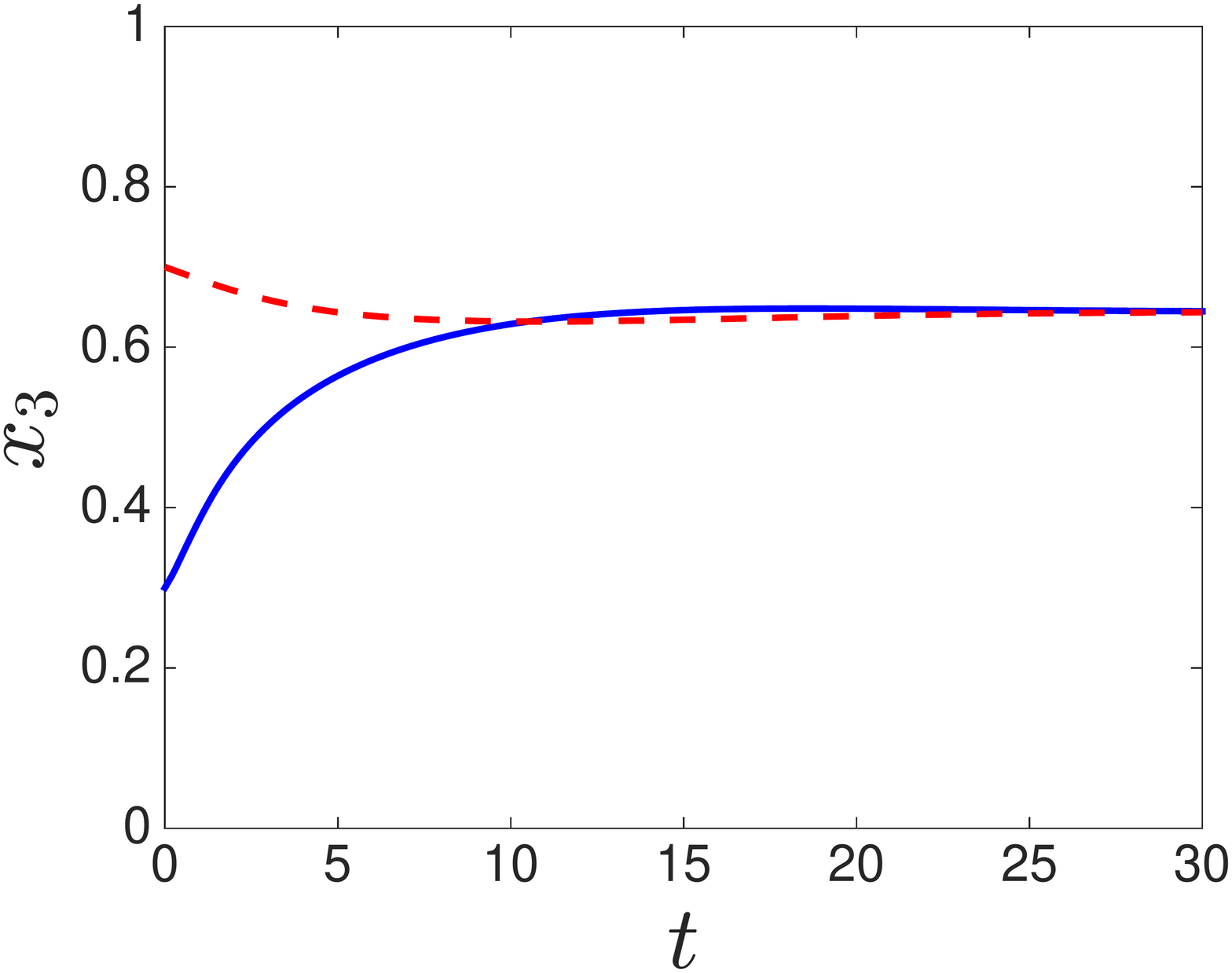}}
\put(360,90){\includegraphics[width=40mm]{./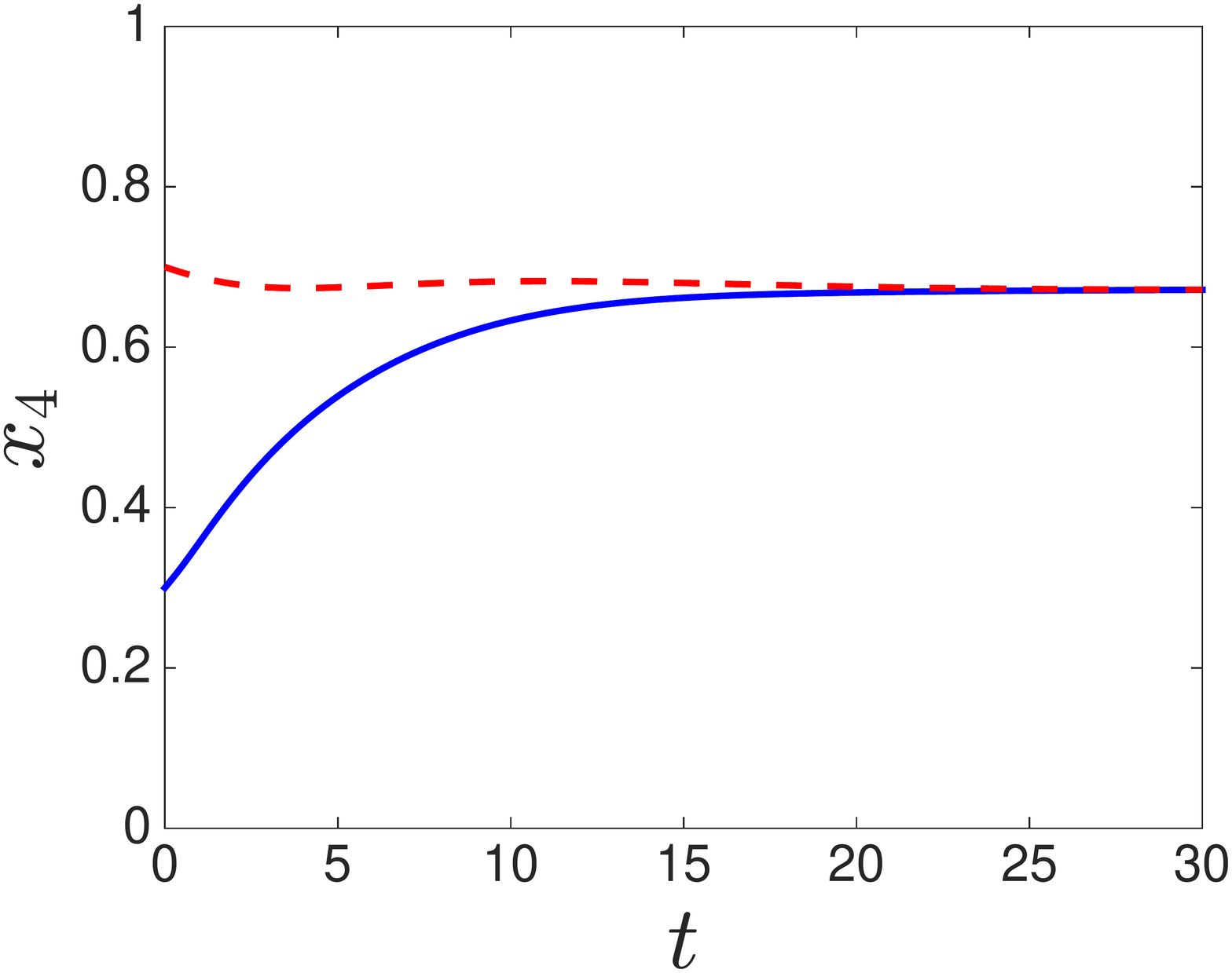}}

\put(0,0){\includegraphics[width=40mm]{./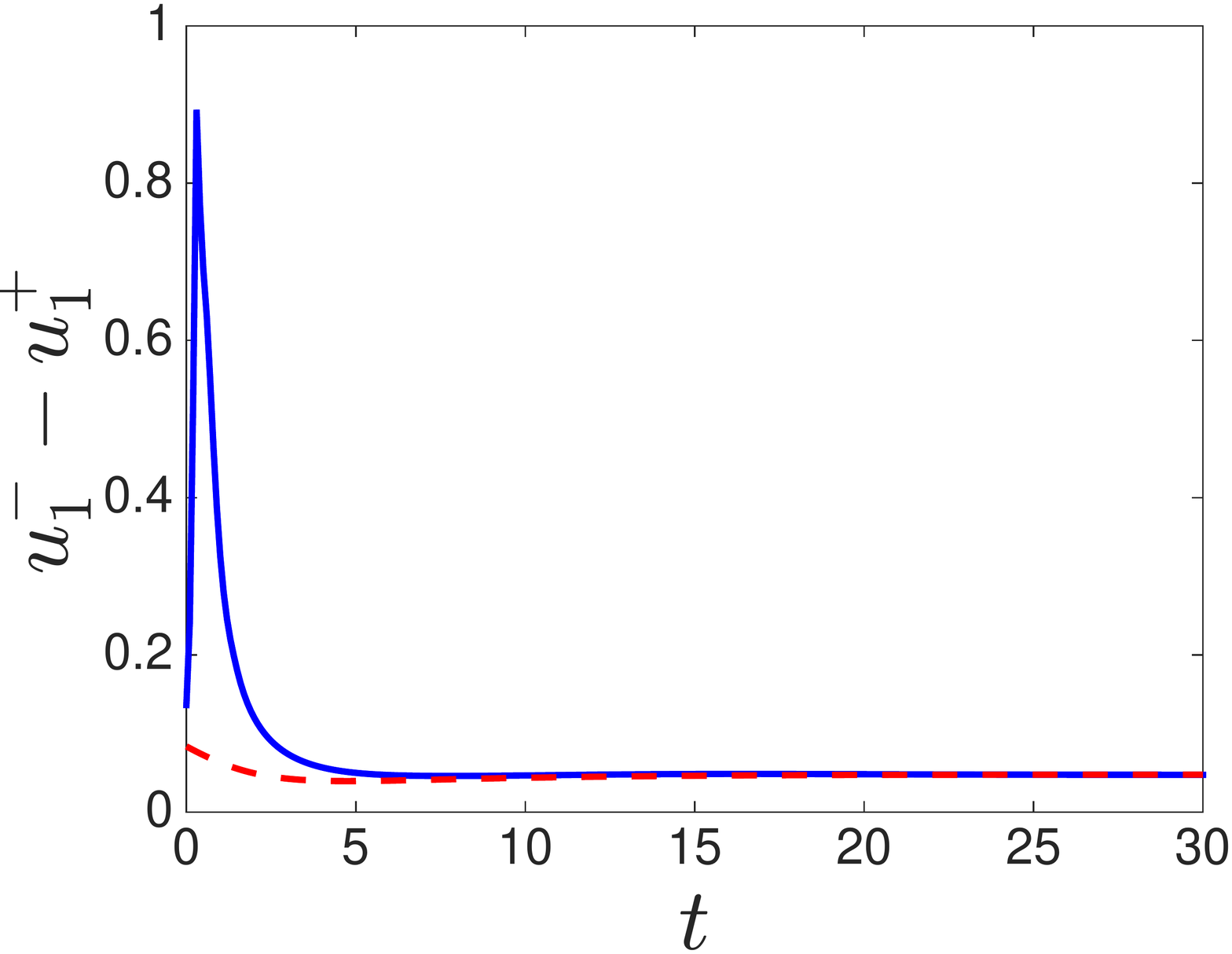}} 
\put(120,0){\includegraphics[width=40mm]{./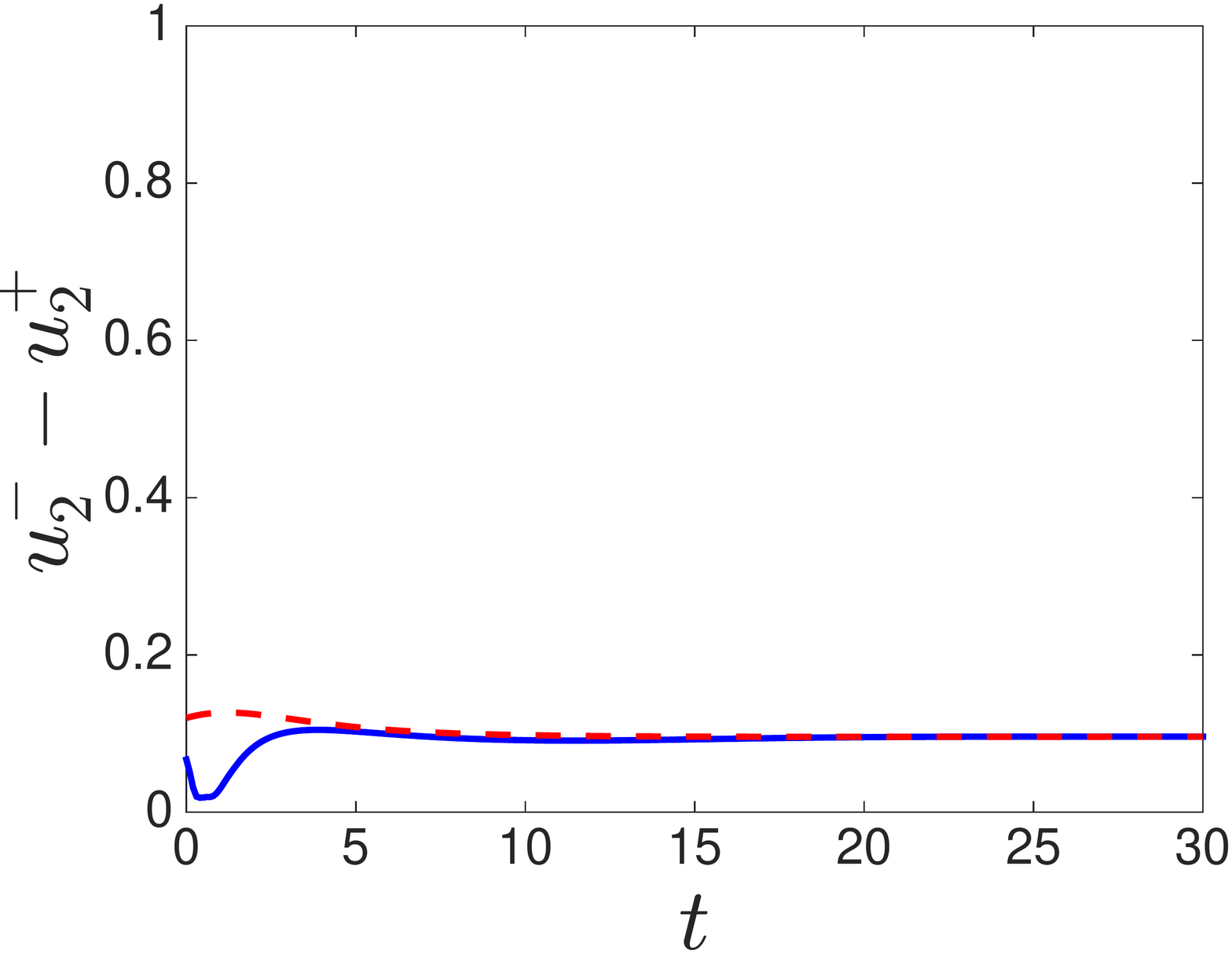}}
\put(240,0){\includegraphics[width=40mm]{./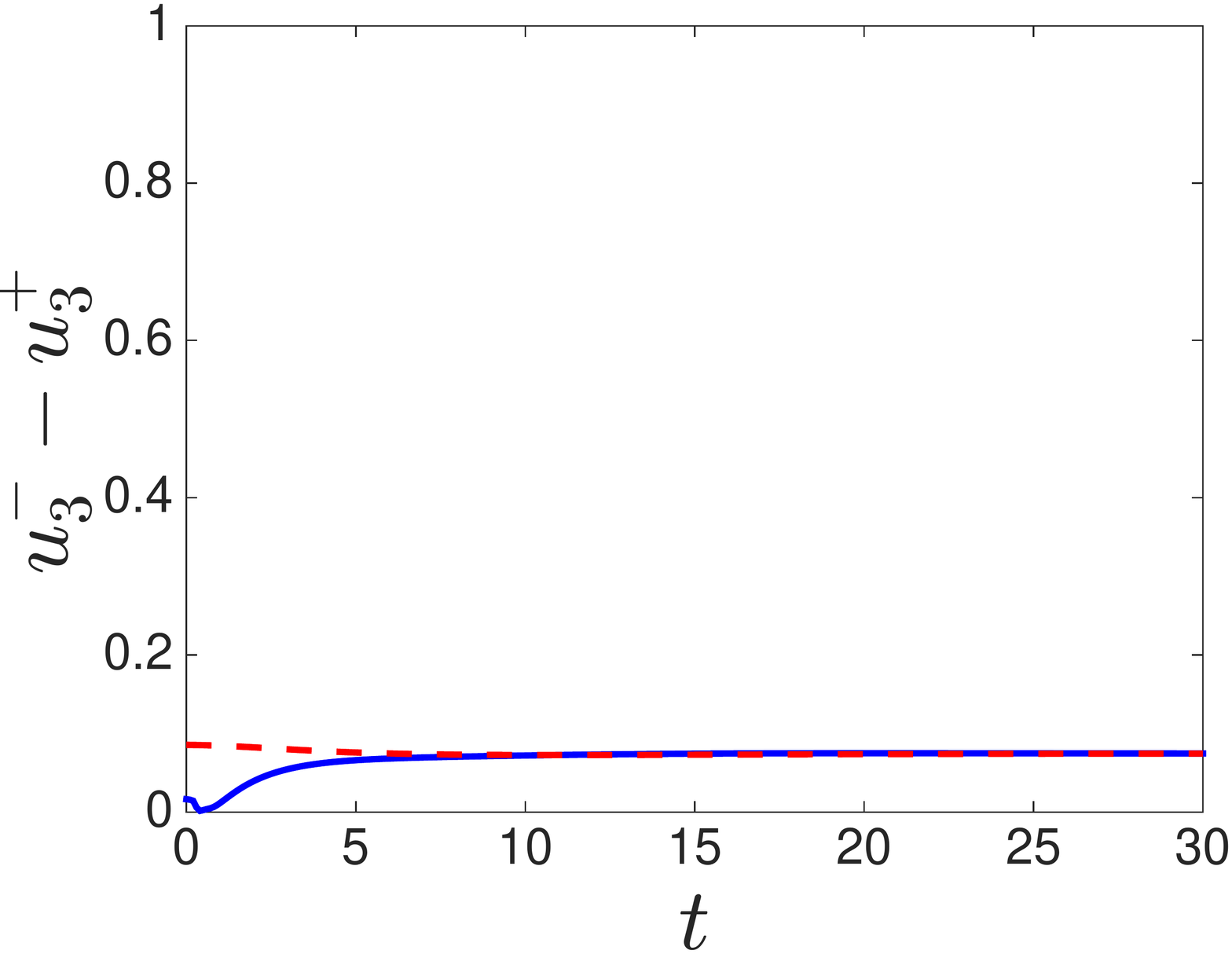}}
\put(360,0){\includegraphics[width=40mm]{./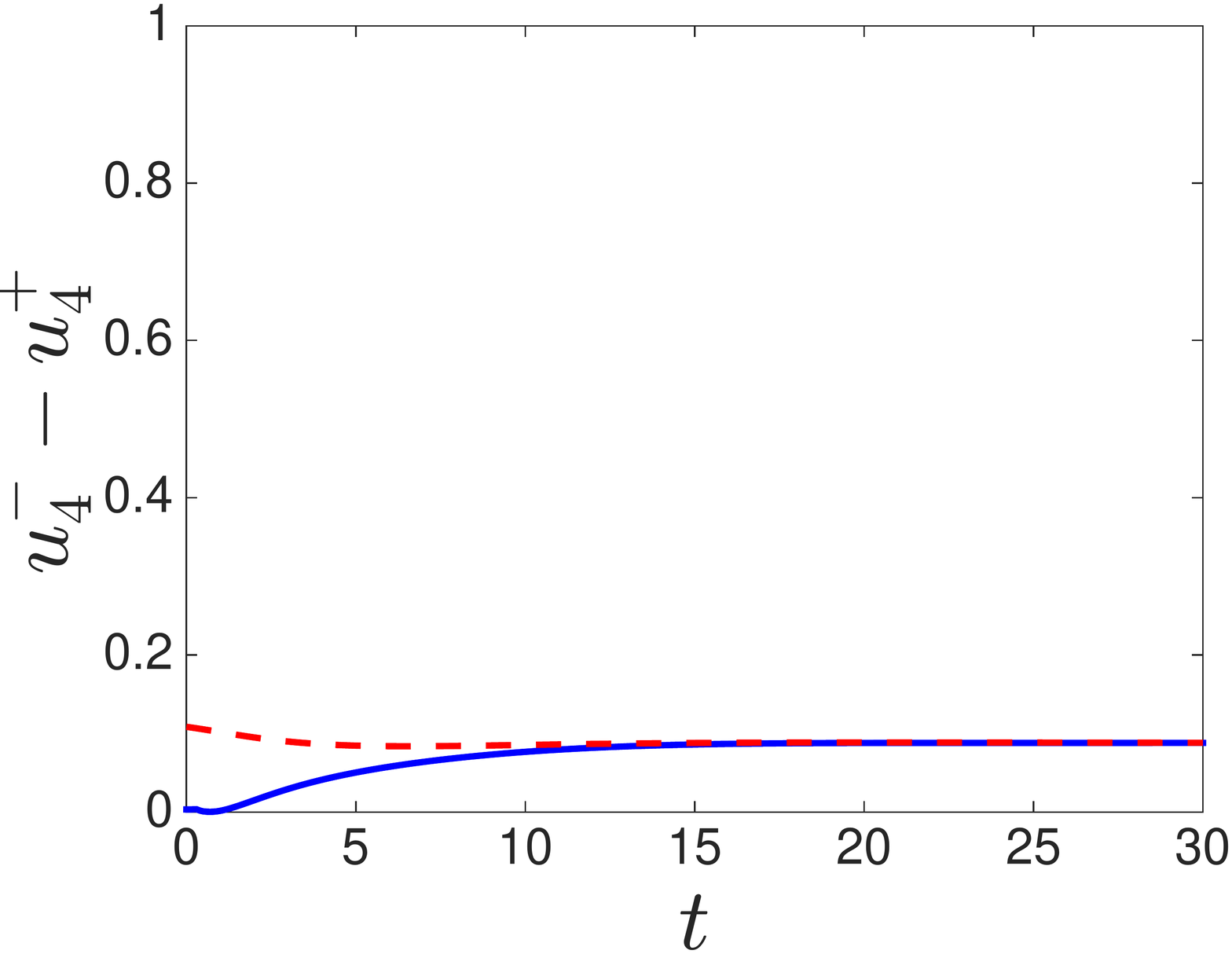}}



\end{picture}
\caption{Controlled Lotka-Volterra -- (blue) trajectory starting from a high initial population of the first species and low initial population of the other species; (red) trajectory starting from low initial population of the first species and high initial population of the other species.}
\label{fig:lotkaVolterra}
\end{figure*}

\section{Conclusion}
This paper presented a method to obtain a sequence of rational controllers asymptotically optimal (under suitable technical assumptions) in a discounted optimal control problem and a method to explicitly estimate suboptimality of each controller. The rational controller of a given degree is obtained by solving a single sum-of-squares problem with no extraction step. The SOS problem solved is feasible for any degree and therefore this method allows to trade off complexity of the controller against performance.

The approach is based on lifting the nonconvex optimal control problem into an infinite dimensional space of measures with continuous densities, where this problem becomes linear. Crucially, this problem is a tightening of the original problem, which follows immediately from the representation result for solutions of the discounted Liouville's equation with a terminal measure~(Theorem~\ref{thm:meas_rep_crucial_thm}). Asymptotic optimality of the extracted controllers then follows by approximating the asymptotically optimal continuous densities (guaranteed to exist by Assumption~\ref{as:smoothCont}) with polynomial densities in such a way that these densities correspond to the densities of the dynamical system (this is the essence of the proof of Theorem~\ref{thm:main}).

\section{Appendix A}
This Appendix contains the proof of Theorem~\ref{thm:Vu_ub_lb}; we use the same notation as in Section~\ref{sec:valApprox}. The inequalities $\overline{V_u}^d \ge V_u \ge \underline{V_u}^d$ follow from Gronwall's Lemma by noticing that the constraints of~(\ref{opt:val_u_ub}) and (\ref{opt:val_u_lb}) imply that 
\begin{equation}\label{eq:ubGron}
{\nabla}\overline{V_u}^{d} \cdot (f + \sum_{i=1}^m f_{u_i}u_i) \le \beta \overline{V_u}^{d} - (l_x + \sum_{i=1}^m l_{u_i}u_i) ,
\end{equation}
\begin{equation}
{\nabla}\underline{V_u}^{d} \cdot (f + \sum_{i=1}^m f_{u_i} u_i) \ge \beta \underline{V_u}^{d} - (l_x + \sum_{i=1}^m l_{u_i}u_i) 
\end{equation}
on $X$ and $\overline{V_u}  \ge  M$, $\underline{V_u} \le M$ on $\partial X$. We detail the argument for the inequality $\overline{V_u}^{d} \ge V_u$, the inequality $V_u \ge \underline{V_u}^d$ being similar. Given $x_0 \in X$ the inequality~(\ref{eq:ubGron}) implies that 
\[
\frac{d}{dt}\overline{V_u}^{d}(x(t\! \mid \! x_0)) \le \beta \overline{V_u}^{d}(x(t\! \mid \! x_0)) - \Big [ l_x(x(t\! \mid \! x_0)) + \sum_{i=1}^m l_{u_i}(x(t\! \mid \! x_0))u_i(x(t\! \mid \! x_0)) \Big ],
\] 
and therefore by Gronwall's Lemma
\[
\overline{V_u}^{d}(x(t\! \mid \! x_0)) \le e^{\beta t} \overline{V_u}^{d}(x_0) - \int_0^t e^{\beta(t-s)}\Big [ l_x(x(s\! \mid \! x_0)) + \sum_{i=1}^m l_{u_i}(x(s\! \mid \! x_0))u_i(x(s\! \mid \! x_0)) \Big ]\, ds
\]
and hence
\begin{equation}\label{eq:GrownAux}
\overline{V_u}^{d}(x_0) \ge e^{-\beta t}\overline{V_u}^{d}(x(t\! \mid \! x_0))  + \int_0^t e^{-\beta s}\Big [ l_x(x(s\! \mid \! x_0)) + \sum_{i=1}^m l_{u_i}(x(s\! \mid \! x_0))u_i(x(s\! \mid \! x_0)) \Big ]\, ds
\end{equation}
for all $t \in [0,\tau]$, where $\tau := \inf\{t\ge 0 \mid x(t\! \mid \! x_0) \notin X \} \in [0,\infty]$ is the first exit time of $X$. Next we observe that $V_u$, the value function associated to $u$, is equal to
\[
V_u(x_0) =\begin{cases}
 \int_0^\infty e^{-\beta s}\Big [ l_x(x(s\! \mid \! x_0)) + \sum_{i=1}^m l_{u_i}(x(s\! \mid \! x_0))u_i(x(s\! \mid \! x_0)) \Big ]ds, & \tau = \infty \\
 Me^{-\beta \tau} + \int_0^\tau e^{-\beta s}\Big [ l_x(x(s\! \mid \! x_0)) + \sum_{i=1}^m l_{u_i}(x(s\! \mid \! x_0))u_i(x(s\! \mid \! x_0)) \Big ]ds, & \tau < \infty.
\end{cases}
\]
In view of~(\ref{eq:GrownAux}), we conclude that $\overline{V_u}^{d}(x_0) \ge V_u(x_0)$ if $\tau = \infty$ since $\overline{V_u}^{d}$ is polynomial and hence bounded on $X$ (and hence $e^{-\beta t}\overline{V_u}^{d}(x(t\! \mid \! x_0)) \to 0$); and we conclude that $\overline{V_u}^{d}(x_0) \ge V_u(x_0)$ if $\tau < \infty$ since $x(\tau \! \mid \!  x_0) \in \partial X$ and $\overline{V_u}^{d} \ge M$ on $\partial X$.

Convergence of the upper and lower bounds~(\ref{eq:convUbLb}) follows from Theorem~\ref{thm:meas_rep_crucial_thm} using infinite-dimensional LP duality and standard results on the convergence of moment relaxations. The proof is similar to the proof of Theorem 5 in~\cite{mci_outer} or Theorem 3.6 in~\cite{sicon} and therefore we only outline it. The hierarchy of SOS programming problems~(\ref{opt:val_u_ub}) and (\ref{opt:val_u_lb}) is dual to the hierarchy of moment relaxations of an infinite-dimensional LP in the cone of nonnegative measures whose dual is an infinite-dimensional LP in $C^1(X)$ and feasible solutions of this dual provide upper or lower bounds on $V_u$. Crucial to applying infinite-dimensional duality results (e.g., \cite[Theorem 3.10]{anderson}) is the boundedness of measures satisfying the discounted Liouville equation~(\ref{eq:discountLiouville_general}) with $\nu_i \le \bar{u}\mu$ and  $\mu_0 = \lambda_X$, where $\lambda_X$ is the restriction of the Lebesgue measure to $X$. Plugging $v = 1$ in~(\ref{eq:discountLiouville_general}) we have $\mu_T(X) + \beta \mu(X) = \mu_0(X)$. Since $ \mu_0(X) = \lambda_X(X)=  \mathrm{vol}\:X < \infty$ and $\beta > 0$ we conclude that $\mu_T$ and $\mu$ are indeed bounded, which implies that $\nu_i$ is also bounded for $i=1,\ldots,m$. Equally important is the absence of duality gap between the finite-dimensional moment relaxations and SOS tightenings (which are both SDP problems); this follows immediately from the presence of the constraint $g_i = N - \|x \|_2^2$ among the constraints describing $X$, which implies the boundedness of the truncated moment sequences feasible in the moment relaxations. The absence of duality gap then follows from~\cite[Lemma~2]{trnovska}. \hfill $\square$

\section{Appendix B}
This appendix presents a proof of Theorem~\ref{thm:meas_rep_crucial}. We will prove a slightly more general version of the result from which Theorem~\ref{thm:meas_rep_crucial} immediately follows:
\begin{theorem}\label{thm:meas_rep_crucial_thm}
Let $\bar{f} : \mathbb{R}^n\to\mathbb{R}^n$ be globally Lipschitz and let the nonnegative measures $\mu$, $\mu_0$, $\mu_T$ 
on $\mathbb{R}^n$ satisfy
\begin{equation}\label{eq:discountLiouville_general_thm}
\int_{\mathbb{R}^n}  v \,d\mu_T =
 \int_{\mathbb{R}^n} v \,d\mu_0  + \int_{\mathbb{R}^n} (\nabla v \cdot \bar{f} - \beta v) \, d\mu
\end{equation}
for all $v \in C^1(\mathbb{R}^n)$. Then there exists an ensemble of probability measures $\{ \tau_{x_0} \}_{x_0 \in X }$
with $\mathrm{spt}\,\tau_{x_0} \subset [0,\infty]$ and an ensemble of trajectories  $\{ x(\cdot \!\mid\! x_0)\}_{x_0\in X}$
of the ODE~$\dot{x} = \bar{f}(x)$ and
\begin{subequations}\label{eq:meas_traj_thm}
\begin{align}
\int_{\mathbb{R}^n} v(x) \,d\mu_0(x) &= \int_{\mathbb{R}^n} v(x(0\!\mid\! x_0)) \,d\mu_0(x_0),\\
\int_{\mathbb{R}^n} v(x)\,d\mu(x) &= \int_{\mathbb{R}^n}  \int_0^{\infty} \int_0^{\tau} e^{-\beta t}v(x(t\!\mid\!x_0))\,dt\, d\tau_{x_0}(\tau) \,d\mu_0(x_0), \label{eq:meas_mu_thm} \\
  \int_{\mathbb{R}^n} v(x)\,d\mu_T(x) &= \int_{\mathbb{R}^n} \int_0^{\infty} e^{-\beta \tau }v(\tau(x_0)) \, d\tau_{x_0}(\tau) \,d\mu_0(x_0), \label{eq:meas_muT_thm}
\end{align}
\end{subequations}
for all $v\in L^1(\mathbb{R}^n)$.
\end{theorem}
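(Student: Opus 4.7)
The strategy is to reduce the infinite-horizon discounted Liouville equation~(\ref{eq:discountLiouville_general_thm}) to a standard undiscounted continuity equation on a time-augmented state space, where a classical superposition theorem in the spirit of \cite[Theorem 4.4]{ambrosio_crippa} applies. First I would work on $\tilde X:=\mathbb{R}^n\times[0,\infty]$ with the globally Lipschitz autonomous vector field $\tilde f(x,t):=(\bar f(x),1)$, and set $\tilde\mu_0:=\mu_0\otimes\delta_0$. The key construction is to produce augmented measures $\tilde\mu$ and $\tilde\mu_T$ on $\tilde X$ whose exponentially weighted $x$-projections recover the originals, $\pi_{x*}(e^{-\beta t}\tilde\mu)=\mu$ and $\pi_{x*}(e^{-\beta t}\tilde\mu_T)=\mu_T$, and such that the triple $(\tilde\mu_0,\tilde\mu,\tilde\mu_T)$ satisfies the undiscounted continuity equation on $\tilde X$,
\begin{equation*}
\int_{\tilde X}\phi\,d\tilde\mu_T=\int_{\tilde X}\phi\,d\tilde\mu_0+\int_{\tilde X}(\nabla_x\phi\cdot\bar f+\partial_t\phi)\,d\tilde\mu\quad\forall\,\phi\in C^1_c(\tilde X).
\end{equation*}
Testing this equation with the special function $\phi(x,t)=e^{-\beta t}v(x)$ returns~(\ref{eq:discountLiouville_general_thm}) exactly, which both provides a consistency check and tells us what the construction must achieve.

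Given such $\tilde\mu,\tilde\mu_T$, the second step is to apply the classical undiscounted superposition theorem to the augmented triple. Because $\tilde f$ is globally Lipschitz, the integral curves of $\tilde f$ starting at $(x_0,0)$ are uniquely the pairs $(x(t\mid x_0),t)$, so the theorem yields a probability measure $\eta$ on path space that is concentrated on these curves and has initial marginal proportional to $\mu_0$. Disintegrating $\eta$ over the initial point $x_0\in\mathrm{spt}\,\mu_0$ collapses the path-space conditional to a probability measure $\tau_{x_0}$ on $[0,\infty]$ that records the (possibly random) stopping time. Substituting the projection identities back into the path-space representation recovers~(\ref{eq:meas_traj_thm}) line by line: (\ref{eq:meas_mu_thm}) follows from the augmented-occupation identity combined with $\pi_{x*}(e^{-\beta t}\tilde\mu)=\mu$, and (\ref{eq:meas_muT_thm}) from the augmented-terminal identity combined with $\pi_{x*}(e^{-\beta t}\tilde\mu_T)=\mu_T$.

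The hard part is the construction of $\tilde\mu$ and $\tilde\mu_T$ in the first step. The weighted-projection constraints are underdetermined, and one must choose the augmented measures so that the full undiscounted equation on $\tilde X$ holds. My plan is an approximation argument. First discretize $\mu_0$ as a finitely supported measure $\mu_0^k=\sum_j\alpha_j^k\delta_{x_0^j}$; for such data the Cauchy--Lipschitz theorem produces unique trajectories $x(\cdot\mid x_0^j)$, and testing~(\ref{eq:discountLiouville_general_thm}) against a rich family of $v\in C^1$ reduces the identification of the stopping distributions $\tau_{x_0^j}^k$ to a one-dimensional moment-type problem on $[0,\infty]$, which is solvable using the nonnegativity of $\mu$ and $\mu_T$. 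The augmented measures $\tilde\mu^k,\tilde\mu_T^k$ are then defined directly by the formulas mimicking~(\ref{eq:meas_mu_thm})--(\ref{eq:meas_muT_thm}) and satisfy the augmented continuity equation by construction. Weak-$*$ compactness is guaranteed by the uniform total-mass bound obtained from $v\equiv 1$ in~(\ref{eq:discountLiouville_general_thm}), namely $\beta\mu(\mathbb{R}^n)+\mu_T(\mathbb{R}^n)=\mu_0(\mathbb{R}^n)$, and tightness in the time variable is provided by the discount $\beta>0$ through the exponential damping of mass at large $t$. Both the projection identities and the augmented continuity equation are preserved under weak-$*$ limits, yielding admissible $\tilde\mu,\tilde\mu_T$ to which the superposition theorem applies, completing the proof.
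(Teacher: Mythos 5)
Your reduction to a time-augmented undiscounted continuity equation is an attractive idea (and is related to what the paper itself suggests would be needed to handle non-Lipschitz fields), but the proposal has a genuine gap precisely at the step you identify as ``the hard part'': the construction of $\tilde\mu,\tilde\mu_T$. For a Dirac initial measure $\mu_0=\delta_{x_0}$ you assert that testing~(\ref{eq:discountLiouville_general_thm}) against a rich family of $v$ reduces the identification of $\tau_{x_0}$ to a solvable one-dimensional moment problem on $[0,\infty]$. This presupposes the crux of the theorem: that $\mu$ and $\mu_T$ are concentrated on the single trajectory $t\mapsto x(t\!\mid\!x_0)$, and that the mass of $\mu_T$ at a point reached at time $\tau$ cannot exceed $e^{-\beta\tau}$. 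Nothing in the nonnegativity of $\mu,\mu_T$ alone gives this; it is exactly what the paper proves by first solving the resolvent equation $\nabla v\cdot\bar f-\beta v=w$ along trajectories (Lemma~\ref{lem:transPortEq_sol}) and then feeding in carefully chosen nonnegative $w$ (Urysohn-type functions vanishing on the trace through $x_0$) to rule out mass of $\mu_T$ off the trajectory and to bound its weight. Without an argument of this kind your ``moment-type problem'' is not well posed, because the support of the unknown stopping distribution on the trajectory has not been established. Moreover, for general $\mu_0$ the discretization $\mu_0^k=\sum_j\alpha_j^k\delta_{x_0^j}$ does not come with companion measures $\mu^k,\mu_T^k$ satisfying~(\ref{eq:discountLiouville_general_thm}): the original $\mu,\mu_T$ no longer satisfy the equation with $\mu_0^k$, and producing a decomposition of $\mu,\mu_T$ indexed by the atoms of $\mu_0^k$ is tantamount to the disintegration/superposition structure you are trying to construct, so the scheme is circular as stated.

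There is also an analytic obstruction to invoking the classical superposition theorem on the augmented space. The undiscounted augmented occupation measure $\tilde\mu$ (the one that must satisfy the plain continuity equation with $\tilde f=(\bar f,1)$) has infinite total mass whenever a positive fraction of trajectories is never stopped, since it is the $e^{\beta t}$-reweighting of the finite measure you project to $\mu$; the exponential damping you invoke for tightness belongs to $e^{-\beta t}\tilde\mu$, not to $\tilde\mu$ itself. If instead you keep the weight and work with the finite measure $e^{-\beta t}\tilde\mu$, the equation it satisfies is again a damped (discounted) continuity equation, and the finite-time superposition result of~\cite{ambrosio_crippa} does not apply off the shelf; one would have to either localize in time on $[0,T]$ and patch, or extend the superposition principle to the damped equation --- which is essentially the content of Theorem~\ref{thm:meas_rep_crucial_thm} itself. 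By contrast, the paper avoids the augmentation altogether: its resolvent lemma converts the discount into an explicit exponential along trajectories, the Dirac-to-Dirac, countable, and general-$\mu_T$ cases are handled by direct test-function arguments, and only the final passage to general $\mu_0$ uses a weak-$*$ approximation. To repair your proof you would need to supply, at minimum, the concentration and mass-bound arguments for Dirac $\mu_0$ (e.g., via the paper's Lemma~\ref{lem:transPortEq_sol}) before the augmented-space machinery can be set in motion.
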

Theorem~\ref{thm:meas_rep_crucial} follows from Theorem~\ref{thm:meas_rep_crucial_thm} by setting $\bar{f}= f + \sum_{i=1}^m f_{u_i}u_i$ and modifying $f$ and $f_{u_i}$ outside the compact set $X$ such that $\bar{f}$ is globally Lipschitz\footnote{Such modification is always possible. For instance let $\bar{f}(x) = \min_{y \in X}\{f(y) + \sum_{i=1}^m f_{u_i}(y)u_i(y) + L\|x-y\| \}$, where $L$ is the Lipschitz constant of $f + \sum_{i=1}^m f_{u_i}u_i$ on $X$.}. The conclusion that $x(t\!\mid \! x_0) \in X$ for all $t \in \mathrm{spt}\,\tau_{x_0}$ follows by taking $v(x) = e^{- \|x \|^2}I_{\mathbb{R}^n \setminus X}(x)$ in~(\ref{eq:meas_traj_thm}), where $I_A$ is the indicator function of a set $A$, i.e. $I_A(x)= 1$ if $x \in A$ and $I_A(x) = 0$ otherwise.

Suppose therefore that~(\ref{eq:meas_traj_thm}) holds. First we will prove a simple result. In the rest of this Appendix we will use the notation $C_c^k$ for the space of all compactly supported $k$-times continuously differentiable functions.
\begin{lemma}\label{lem:transPortEq_sol}
For any $w \in C_c^1(\mathbb{R}^n)$, the equation
\begin{equation}\label{eq:trans_disc}
\nabla v \cdot \bar f - \beta v = w
\end{equation}
has a solution $v$ such that for all $x_0 \in \mathbb{R}^n$ it holds
\begin{equation}\label{eq:v_rep}
v(x_0) = -\int_0^\infty e^{-\beta t} w(x(t \! \mid \! x_0))\, dt.
\end{equation}
\end{lemma}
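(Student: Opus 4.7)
The natural plan is to \emph{define} $v$ by the formula in~(\ref{eq:v_rep}) and then verify the two required properties: that it is well-defined and regular enough, and that it satisfies the transport equation~(\ref{eq:trans_disc}). Well-definedness is immediate: $w\in C_c^1$ is bounded, the integrand is dominated by $\|w\|_\infty e^{-\beta t}$ with $\beta > 0$, and since $\bar f$ is globally Lipschitz the flow $\phi_t(x_0) := x(t\mid x_0)$ exists globally and is Lipschitz in $x_0$ uniformly on compact $t$-intervals. Regularity of $v$ (differentiability in $x_0$) then follows from differentiability of the flow with respect to initial conditions together with the compact support of $w$, which confines the relevant $x$-values to a bounded set and makes the differentiation under the integral sign straightforward.

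The core computation will exploit the semigroup property $\phi_{t+s} = \phi_t\circ \phi_s$. Substituting and changing variables $r = t+s$ gives
\begin{equation*}
v(\phi_s(x_0)) = -\int_0^\infty e^{-\beta t} w(\phi_{t+s}(x_0))\, dt = -e^{\beta s}\int_s^\infty e^{-\beta r} w(\phi_r(x_0))\, dr,
\end{equation*}
which after splitting the tail integral as $\int_s^\infty = \int_0^\infty - \int_0^s$ becomes
\begin{equation*}
v(\phi_s(x_0)) = e^{\beta s}\Bigl[\,v(x_0) + \int_0^s e^{-\beta r} w(\phi_r(x_0))\,dr\,\Bigr].
\end{equation*}
Differentiating both sides at $s = 0$, the left-hand side yields $\nabla v(x_0)\cdot \bar f(x_0)$ by the chain rule, while the right-hand side yields $\beta v(x_0) + w(x_0)$. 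This is exactly~(\ref{eq:trans_disc}).

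The one genuinely delicate point is the smoothness of $v$: global Lipschitz continuity of $\bar f$ alone only gives Lipschitz dependence of $\phi_t$ on $x_0$, which would only yield a Lipschitz $v$ (hence a solution almost everywhere in the sense of Rademacher). For~(\ref{eq:trans_disc}) to be understood classically in the way it is used later (e.g., as a test function in~(\ref{eq:discountLiouville_general_thm})), one needs $C^1$ dependence of the flow, which is standard when $\bar f$ is $C^1$. In the setting of the paper this is harmless because the vector fields arise from polynomial data plus Lipschitz feedbacks and one can either strengthen the regularity hypothesis on $\bar f$ or approximate $\bar f$ by smooth Lipschitz maps and pass to the limit in both~(\ref{eq:v_rep}) and~(\ref{eq:trans_disc}). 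I would therefore write the proof under the tacit additional assumption that $\bar f$ is $C^1$ (as in the companion Theorem~\ref{thm:meas_rep_crucial_thm}, where $\bar f = f + \sum_i f_{u_i} u_i$ is smooth), and flag the general Lipschitz case as a routine approximation argument.
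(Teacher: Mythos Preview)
Your proposal is correct and follows essentially the same route as the paper: define $v$ by the integral formula, invoke global existence of the flow from the Lipschitz assumption, and verify~(\ref{eq:trans_disc}) by exploiting the semigroup property $\phi_{t+s}=\phi_t\circ\phi_s$ and differentiating in the time parameter. The paper carries this out via an integration-by-parts on the $s$-integral rather than your change of variables $r=t+s$ followed by the product rule, but the two computations are equivalent; your added remark on the regularity needed for $v\in C^1$ is a point the paper leaves implicit.
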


\begin{proof}
Since $\bar{f}$ is globally Lipschitz the solution $x(t\! \mid \! x_0)$ is defined for all $x_0 \in \mathbb{R}^n$ and all $t \ge 0$. Therefore~(\ref{eq:v_rep}) is well defined (notice that $w$ is bounded and $\beta > 0$). Direct computation then gives:
\begin{align*}
\nabla v \cdot \bar{f} (x(t\mid x_0)) & = \frac{d}{dt}v(x(t \! \mid \! x_0)) \\
& = -  \frac{d}{dt} \int_0^\infty e^{-\beta s} w(x(s \! \mid \! x(t \!\mid \! x_0) )) \, ds \\
& = -  \frac{d}{dt} \int_0^\infty e^{-\beta s} w(x(t+s \! \mid \! x_0)) \, ds \\ 
& = -  \int_0^\infty e^{-\beta s} {\nabla}w(x(t+s \! \mid \! x_0)) \cdot \bar{f}(x(t+s \! \mid \! x_0)) \, ds \\ 
& = -  \int_0^\infty e^{-\beta s} {\nabla}w(t+s \! \mid \! x_0)) \cdot \bar{f}(x(t+s \! \mid \! x_0)) \, ds \\
& = -  \int_0^\infty e^{-\beta s} \frac{d}{ds}w(x(t+s \! \mid \! x_0)) \, ds \\ 
& = -  \beta \int_0^\infty e^{-\beta s}w(x(t+s \! \mid \! x_0)) \, ds - [e^{-\beta s}w(x(t+s \! \mid \! x_0)) ]_{0}^\infty \\ 
& = -  \beta \int_0^\infty e^{-\beta s}w(x(s \! \mid \! x(t \!\mid \! x_0) )) \, ds  + w(x(t \!\mid \! x_0) ) \\ 
& = \beta v(x(t \! \mid \! x_0)) + w(x(t \!\mid \! x_0) ).
\end{align*}
Setting $t = 0$, we arrive at~(\ref{eq:trans_disc}).
\end{proof}

\emph{Proof (of Theorem~\ref{thm:meas_rep_crucial_thm})} We will proceed in several steps.
\paragraph{Two Diracs.} We start with the simplest case of $\mu_0 = \delta_{x_0}$ and $\mu_T = a\delta_{x_T}$, $a > 0$, $x_T \in\mathbb{R}^n$, and some $\mu \ge 0$. First we will show that if $(\mu_T,\mu_0,\mu)$ solves~(\ref{eq:discountLiouville_general_thm}) then there exists a time $\tau \ge 0$ such that $x(\tau \!\mid \! x_0) = x_T$. Consider now any $w\in C_c^1(\mathbb{R}^n)$, $w \ge 0$ and the associated $v\in C^1(\mathbb{R}^n)$ solving~(\ref{eq:trans_disc}). Then we have
\[
av(x_T) - v(x_0) = \int_{\mathbb{R}^n}( \nabla v \cdot f - \beta v)\, d\mu =  \int_{\mathbb{R}^n} w\, d\mu \ge 0.
\]
Therefore, by Lemma~\ref{lem:transPortEq_sol},
\[
a v(x_T) \ge v(x_0) = -\int_0^\infty e^{-\beta t} w(x(t \! \mid \! x_0))\, dt.
\]
Using~(\ref{eq:v_rep}) again on $v(x_T)$ we get
\[
 -a\int_0^\infty e^{-\beta t} w(x(t \! \mid \! x_T))\, dt \ge  -\int_0^\infty e^{-\beta t} w(x(t \! \mid \! x_0))\, dt,
\]
or
\begin{equation}\label{eq:derivAux}
 a\int_0^\infty e^{-\beta t} w(x(t \! \mid \! x_T))\, dt \le  \int_0^\infty e^{-\beta t} w(x(t \! \mid \! x_0))\, dt.
\end{equation}
Now pick $S \ge 0$ (to be specified later) and consider the traces
\[
\mathcal{X}_0 = \{ x(t\mid x_0) \mid 0\le t \le S\}.
\]
\[
\mathcal{X}_T = \{ x(t\mid x_T) \mid 0\le t \le S\}.
\]
Assuming there is no $\tau \ge 0$ such that $x(\tau \!\mid \! x_0) = x_T$ we have $\mathcal{X}_0\cap \mathcal{X}_T = \emptyset$ and since $\mathcal{X}_0$ and $\mathcal{X}_T$ are compact there exist (by Uryshon's Lemma with mollification) a function $w \in C_c^1(\mathbb{R}^d ; [0,1])$ such that $w= 0$ on $\mathcal{X}_0$ and $w = 1$ on~$\mathcal{X}_T$.
Then the left hand side of~(\ref{eq:derivAux}) is greater than or equal to $a (1-e^{-\beta S}) / \beta$
whereas the right hand side is less than or equal to $e^{-\beta S} / \beta$.
Since $a > 0$ and $\beta > 0$ we arrive at a contradiction with~(\ref{eq:derivAux}) by picking a sufficiently large $S$. Therefore there exists a $\tau \ge 0$ such that $x(\tau \!\mid\! x_0) = x_T$ (i.e., $x_T$ and $x_0$ are on the same trace of the flow associated to $\dot{x} = f(x)$).

Now we prove that $a \le e^{-\beta \tau}$. Since $x_T = x(\tau)$ and $x_0$ are on the same trace we have
\[
v(x_0) = e^{-\beta \tau}\underbrace{v(x_T)}_{v(x(\tau))} - \int_0^\tau w(x(t\!\mid\! x_0)) \, dt.
\]
Using again $av(x_T) \ge v(x_0)$ if $w \ge 0$ we get
\[
a v(x_T) \ge e^{-\beta \tau}v(x_T) - \int_0^\tau w(x(t\!\mid\! x_0)) \, dt, \:\:\mathrm{or}
\]
\begin{equation}\label{eq:asmall}
(e^{-\beta \tau }-a) \int_0^\infty e^{-\beta t} w(x(  t  \mid\! x_T   )) dt \ge - \int_0^\tau w(x(t\!\mid\! x_0)) \, dt.
\end{equation}
Consider the set
\[
\mathcal{X}_\tau = \{ x(t\mid x_0) \mid 0\le t \le \tau\}.
\]
Since $x_0$ and $x_T$ are on the same trace (and $x_T$ follows $x_0$) there exists $w \in C_c^1(X)$, $w\ge 0$, such that $w= 0$ on $\mathcal{X}_\tau$ and $w > 0$ elsewhere (e.g., let $w(x) = \min(\mathrm{dist}(x, \mathcal{X}_\tau),1)$ with appropriate mollification). With this choice of $w$ the equation~(\ref{eq:asmall}) gives
\[
(e^{-\beta \tau }-a) \int_0^\infty e^{-\beta t} w(x(  t  \mid\! x_T   )) dt \ge 0
\]
and therefore $a \le e^{-\beta \tau}$ since the integral is strictly positive. This proves the first two claims.

To finish we observe that any solution to~(\ref{eq:trans_disc}) satisfies
\[
e^{-\beta \tau }v(x_T) = v(x_0) + \int_0^\tau e^{-\beta t} w(x(t \! \mid \! x_0))\, dt.
\]
Therefore
\[
av(x_T) = v(x_0)ae^{\beta \tau} + ae^{\beta \tau}\int_0^\tau e^{-\beta t} w(x(t \! \mid \! x_0))\, dt.
\]
Using~(\ref{eq:v_rep}) we get
\[
av(x_T) = v(x_0) + \underbrace{ae^{\beta \tau}}_{\ge 0}\int_0^\tau e^{-\beta t} w(x(t \! \mid \! x_0))\, dt +\underbrace{ (1-ae^{\beta \tau})}_{\ge 0}\int_0^\infty e^{-\beta t} w(x(t \! \mid \! x_0))\, dt.
\]
Since
\[
av(x_T) - v(x_0) = \int_{\mathbb{R}^n} w \, d\mu
\]
we conclude that
\[
\int_{\mathbb{R}^n} w\, d\mu = ae^{\beta \tau}\int_0^\tau e^{-\beta t} w(x(t \! \mid \! x_0))\, dt + (1-ae^{\beta \tau})\int_0^\infty e^{-\beta t} w(x(t \! \mid \! x_0))\, dt,
\]
i.e., $\mu$ is indeed generated by trajectories of $\dot{x} = f(x)$ (in this case by two trajectories, both starting at $x_0$, one stopping at $\tau$, the other one continuing to infinity with weights given by the ratio of masses of $\mu_0$ and $\mu_T$). That is the measure $\tau_{x_0}$ is given by
\[
\tau_{x_0} = ae^{\beta \tau}\delta_\tau  + (1-ae^{\beta \tau})\delta_\infty
\]
as expected.

\paragraph{Dirac at $x_0$, sum of Diracs for $\mu_T$.}
Next we treat the case where $\mu_T = \sum_{i=1}^\infty a_i\delta_{x_i}$ for some $a_i \ge 0$ and $x_i \in \mathbb{R}^n$. Using the same argument as in the previous case we can show that
\[
x_i \in \mathcal{X}_0 = \{x(t\mid x_0) \mid t\ge 0\} 
\]
for all $i$ and that the condition
\[
\sum_{i=1}^\infty a_i e^{\beta \tau _i} \le 1,
\]
holds with $\tau_i$ being the times to reach $x_i$ from $x_0$. Then we have
\[ \tau_{x_0} = \sum_{i=1}^\infty a_i e^{\beta \tau_i} \delta_{\tau_i} + (1- \sum_{i=1}^\infty a_i  e^{\beta \tau_i}) \delta_{\infty}.
\]

\paragraph{Dirac at $x_0$ arbitrary $\mu_T$.}
In the same way as before we can show that the support of $\mu_T$ must be on the trace $\mathcal{X}_0$. Then we can define the measure $\hat\tau_{x_0} $ by
\[
\hat\tau_{x_0} (A) := \mu_T(x(A \mid x_0)),\quad A \subset [0,\infty)
\]
and show that it has to satisfy the condition $\int_0^{\infty} e^{\beta t } d\hat{\tau}_{x_0}(t) \le 1$.
Next, using the fact that the mapping $t \mapsto x(t \! \mid \! x_0)$ is invertible, we obtain
\[
\int_{\mathbb{R}^n} v \, d\mu_T = \int_0^{\infty}  v( x(t \! \mid \! x_0)  ) \, d\hat{\tau}_{x_0}(t).
\]
The conclusion of the theorem then holds with $\tau_{x_0}$ defined by
\[
\tau_{x_0}(A) = \int_0^{\infty} I_A(t) e^{\beta t } d\hat{\tau}_{x_0}(t) + \Big [1- \int_0^{\infty} e^{\beta t } d\hat{\tau}_{x_0}(t) \Big ] I_A(\infty), \quad A\subset [0,\infty].
\]


\paragraph{Arbitrary $\mu_0$, arbitrary $\mu_T$.}
The general case follows by approximating $\mu_0$ by a sum of Dirac measures, using the fact that any measure is the weak limit of a sequence of Dirac measures.

\end{document}